\begin{document}




\newfont{\teneufm}{eufm10}
\newfont{\seveneufm}{eufm7}
\newfont{\fiveeufm}{eufm5}
%
%
\newfam\eufmfam
                    \textfont\eufmfam=\teneufm \scriptfont\eufmfam=\seveneufm
                    \scriptscriptfont\eufmfam=\fiveeufm

%
%
\def\frak#1{{\fam\eufmfam\relax#1}}
%


\def\bbbr{{\rm I\!R}} 
\def\bbbc{{\rm I\!C}} 
\def\bbbm{{\rm I\!M}}
\def\bbbn{{\rm I\!N}} 
\def\bbbf{{\rm I\!F}}
\def\bbbh{{\rm I\!H}}
\def\bbbk{{\rm I\!K}}
\def\bbbl{{\rm I\!L}}
\def\bbbp{{\rm I\!P}}
\newcommand{\lcm}{{\rm lcm}}
\def\bbbone{{\mathchoice {\rm 1\mskip-4mu l} {\rm 1\mskip-4mu l}
{\rm 1\mskip-4.5mu l} {\rm 1\mskip-5mu l}}}
\def\bbbc{{\mathchoice {\setbox0=\hbox{$\displaystyle\rm C$}\hbox{\hbox
to0pt{\kern0.4\wd0\vrule height0.9\ht0\hss}\box0}}
{\setbox0=\hbox{$\textstyle\rm C$}\hbox{\hbox
to0pt{\kern0.4\wd0\vrule height0.9\ht0\hss}\box0}}
{\setbox0=\hbox{$\scriptstyle\rm C$}\hbox{\hbox
to0pt{\kern0.4\wd0\vrule height0.9\ht0\hss}\box0}}
{\setbox0=\hbox{$\scriptscriptstyle\rm C$}\hbox{\hbox
to0pt{\kern0.4\wd0\vrule height0.9\ht0\hss}\box0}}}}
\def\bbbq{{\mathchoice {\setbox0=\hbox{$\displaystyle\rm
Q$}\hbox{\raise
0.15\ht0\hbox to0pt{\kern0.4\wd0\vrule height0.8\ht0\hss}\box0}}
{\setbox0=\hbox{$\textstyle\rm Q$}\hbox{\raise
0.15\ht0\hbox to0pt{\kern0.4\wd0\vrule height0.8\ht0\hss}\box0}}
{\setbox0=\hbox{$\scriptstyle\rm Q$}\hbox{\raise
0.15\ht0\hbox to0pt{\kern0.4\wd0\vrule height0.7\ht0\hss}\box0}}
{\setbox0=\hbox{$\scriptscriptstyle\rm Q$}\hbox{\raise
0.15\ht0\hbox to0pt{\kern0.4\wd0\vrule height0.7\ht0\hss}\box0}}}}
\def\bbbt{{\mathchoice {\setbox0=\hbox{$\displaystyle\rm
T$}\hbox{\hbox to0pt{\kern0.3\wd0\vrule height0.9\ht0\hss}\box0}}
{\setbox0=\hbox{$\textstyle\rm T$}\hbox{\hbox
to0pt{\kern0.3\wd0\vrule height0.9\ht0\hss}\box0}}
{\setbox0=\hbox{$\scriptstyle\rm T$}\hbox{\hbox
to0pt{\kern0.3\wd0\vrule height0.9\ht0\hss}\box0}}
{\setbox0=\hbox{$\scriptscriptstyle\rm T$}\hbox{\hbox
to0pt{\kern0.3\wd0\vrule height0.9\ht0\hss}\box0}}}}
\def\bbbs{{\mathchoice
{\setbox0=\hbox{$\displaystyle     \rm S$}\hbox{\raise0.5\ht0\hbox
to0pt{\kern0.35\wd0\vrule height0.45\ht0\hss}\hbox
to0pt{\kern0.55\wd0\vrule height0.5\ht0\hss}\box0}}
{\setbox0=\hbox{$\textstyle        \rm S$}\hbox{\raise0.5\ht0\hbox
to0pt{\kern0.35\wd0\vrule height0.45\ht0\hss}\hbox
to0pt{\kern0.55\wd0\vrule height0.5\ht0\hss}\box0}}
{\setbox0=\hbox{$\scriptstyle      \rm S$}\hbox{\raise0.5\ht0\hbox
to0pt{\kern0.35\wd0\vrule height0.45\ht0\hss}\raise0.05\ht0\hbox
to0pt{\kern0.5\wd0\vrule height0.45\ht0\hss}\box0}}
{\setbox0=\hbox{$\scriptscriptstyle\rm S$}\hbox{\raise0.5\ht0\hbox
to0pt{\kern0.4\wd0\vrule height0.45\ht0\hss}\raise0.05\ht0\hbox
to0pt{\kern0.55\wd0\vrule height0.45\ht0\hss}\box0}}}}
\def\bbbz{{\mathchoice {\hbox{$\sf\textstyle Z\kern-0.4em Z$}}
{\hbox{$\sf\textstyle Z\kern-0.4em Z$}}
{\hbox{$\sf\scriptstyle Z\kern-0.3em Z$}}
{\hbox{$\sf\scriptscriptstyle Z\kern-0.2em Z$}}}}
\def\ts{\thinspace}

\newtheorem{theorem}{Theorem}
\newtheorem{lemma}[theorem]{Lemma}
\newtheorem{claim}[theorem]{Claim}
\newtheorem{cor}[theorem]{Corollary}
\newtheorem{prop}[theorem]{Proposition}
\newtheorem{definition}{Definition}
\newtheorem{question}[theorem]{Open Question}
\newtheorem{remark}[theorem]{Remark}

\def\squareforqed{\hbox{\rlap{$\sqcap$}$\sqcup$}}
\def\qed{\ifmmode\squareforqed\else{\unskip\nobreak\hfil
\penalty50\hskip1em\null\nobreak\hfil\squareforqed
\parfillskip=0pt\finalhyphendemerits=0\endgraf}\fi}

\def\cA{{\mathcal A}}
\def\cB{{\mathcal B}}
\def\cC{{\mathcal C}}
\def\cD{{\mathcal D}}
\def\cE{{\mathcal E}}
\def\cF{{\mathcal F}}
\def\cG{{\mathcal G}}
\def\cH{{\mathcal H}}
\def\cI{{\mathcal I}}
\def\cJ{{\mathcal J}}
\def\cK{{\mathcal K}}
\def\cL{{\mathcal L}}
\def\cM{{\mathcal M}}
\def\cN{{\mathcal N}}
\def\cO{{\mathcal O}}
\def\cP{{\mathcal P}}
\def\cQ{{\mathcal Q}}
\def\cR{{\mathcal R}}
\def\cS{{\mathcal S}}
\def\cT{{\mathcal T}}
\def\cU{{\mathcal U}}
\def\cV{{\mathcal V}}
\def\cW{{\mathcal W}}
\def\cX{{\mathcal X}}
\def\cY{{\mathcal Y}}
\def\cZ{{\mathcal Z}}

\newcommand{\comm}[1]{\marginpar{%
\vskip-\baselineskip 
\raggedright\footnotesize
\itshape\hrule\smallskip#1\par\smallskip\hrule}}





\def\MOV{{\bf{MOV}}}

\hyphenation{re-pub-lished}

\def\dist{\mathrm{dist}}
\def\btau{\overline\tau}
\def\ord{{\mathrm{ord}}}
\def\Nm{{\mathrm{Nm}}}
\def\Sp{{\mathrm{Sp}}}
\renewcommand{\vec}[1]{\mathbf{#1}}

\def \C{{\mathbb{C}}}
\def \F {{\mathbb{F}}}
\def \L{{\mathbb{L}}}
\def \K{{\mathbb{K}}}
\def \Z{{\mathbb{Z}}}
\def \N{{\mathbb{N}}}
\def \Q{{\mathbb{Q}}}
\def\E{{\mathbf E}}
\def\H{{\mathbf H}}
\def\G{{\mathcal G}}
\def\O{{\mathcal O}}
\def\cS{{\mathcal S}}
\def \R{{\mathbb{R}}}
\def\Fp{\F_p}
\def \fp{\Fp^*}
\def\\{\cr}
\def\({\left(}
\def\){\right)}
\def\fl#1{\left\lfloor#1\right\rfloor}
\def\rf#1{\left\lceil#1\right\rceil}

\def\Zm{\Z_m}
\def\Zt{\Z_t}
\def\Zp{\Z_p}
\def\Um{\cU_m}
\def\Ut{\cU_t}
\def\Up{\cU_p}

\def\ep{{\mathbf{e}}_p}
\def\HH{\cH}

\def \Prob{{\mathrm {}}}

\def\LC{{\cL}_{C,\cF}(Q)}
\def\LCn{{\cL}_{C,\cF}(nG)}
\def\Tr{\mathrm{Tr}\,}

\def\taubar{\overline{\tau}}
\def\sigmabar{\overline{\sigma}}
\def\Fn{\F_{q^n}}
\def\En{\E(\Fn)}

\def\mand{\qquad \mbox{and} \qquad}


\title[Distribution of the Number of Points 
on  Curves in  Extensions]{On the Distribution of the Number of Points 
on Algebraic Curves in  Extensions of Finite Fields}

\author{Omran Ahmadi} 
\address{Claude Shannon Institute\\
University College Dublin \\
Dublin 4, Ireland} 
\email{omran.ahmadi@ucd.ie}  

\author{Igor E.~Shparlinski}

\address{Department of Computing\\
Macquarie University\\
Sydney, NSW 2109, Australia} 
\email{igor@ics.mq.edu.au}  
\date{\today}

\maketitle

\begin{abstract} 
Let $\cC$ be a smooth absolutely irreducible curve of genus $g \ge 1$
defined over  $\F_q$, the
finite field of $q$ elements. Let $\# \cC(\F_{q^n})$ be the number
of $\F_{q^n}$-rational points on $\cC$.  Under a certain 
multiplicative independence 
condition on the roots of the zeta-function of $\cC$, 
we derive an asymptotic formula for the
number of $n =1, \ldots, N$ such that 
 $(\# \cC(\F_{q^n}) - q^n -1)/2gq^{n/2}$ belongs to  a given interval $\cI  \subseteq [-1,1]$. This can be 
considered as an analogue of the Sato--Tate distribution 
which covers the case when the curve $\E$ is defined
over $\Q$ and considered modulo consecutive 
primes $p$, although in our scenario the distribution 
function is different.  The above multiplicative independence 
condition has, recently, been considered by E.~Kowalski in 
statistical settings. It 
is trivially satisfied for ordinary elliptic curves and we 
also establish it for a natural family of curves of genus $g=2$. 
\end{abstract}


\section{Introduction}

Let $\cC$  be a smooth absolutely irreducible curve  of genus $g \ge 1$ 
defined over the finite field $\F_q$ of $q$ elements.
We denote by  $\cC(\F_{q^n})$  the set of the $\F_{q^n}$-rational 
points on the projective model of  $\cC$.

By the Weil theorem, 
$$
|  \# \cC(\F_{q^n}) - q^n -1| \le 2g q^{n/2}
$$
see~\cite[Section~VIII.5, Bound~(5.7)]{Lor}, 
however the distribution of values of $ \# \cC(\F_{q^n})$
inside of the  interval $[q^n + 1- 2g q^{n/2}, q^n + 1 + 2g q^{n/2}]$,
in particular, the distribution of the ratios
\begin{equation}
\label{eq:ST ration}
\frac{\#\cC(\F_q) - q -1}{2gq^{1/2}} \in [-1,1].
\end{equation}
is not understood well enough. 

In the case  of elliptic curves $\cC = \cE$ more is known.
For example, the distribution of the  ratios~\eqref{eq:ST ration},
where the 
curve $\cE$ is defined over $\Q$ and reduced modulo
consecutive primes $p$ (that is, $q=p$) is described by the  
{\it Sato--Tate conjecture\/}, which has been recently proven by
R.~Taylor~\cite{Tayl}. In particular, the proportion of primes 
$p \le x$ for which the analogue of the above ratios belongs 
to the interval $[\beta, \gamma]$ is given by 
$$
\mu_{ST}(\beta, \gamma)= \frac{2}{\pi} \int_{\max\{0,\beta\}}^{\min\{1,\gamma\}}
\sqrt{1 - \alpha^2}d\, \alpha, 
$$
as $x\to \infty$.

B.~J.~Birch~\cite{Birch} has also established an analogue of the 
 Sato--Tate conjecture in the dual case when the finite 
field $\F_q$ is fixed and the 
ratios~\eqref{eq:ST ration} are taken 
over all elliptic curves $\E$ over $\F_q$, see also~\cite{Yos}. 

Finally, there is also 
a series of works  showing that a similar type of behavior 
also holds in mixed situations (when both the field and the
curve vary) 
over various families of curves, see~\cite{BaZh,BaSh}.

Here we fix a smooth absolutely irreducible curve 
$\cC$ of genus $g \ge 1$ over $\F_q$
and consider analogues of the  ratios~\eqref{eq:ST ration} taken  
in the consecutive extensions of $\F_q$, 
that is, for $\F_{q^n}$-rational points on $\cC$.
We write the cardinality $\#\cC\(\F_{q^n}\)$ of 
the sets of $\F_{q^n}$-rational points 
on the projective model of  $\cC$ as
$$
\# \cC(\F_{q^n}) = q^n + 1 - a_n.
$$
and study the distribution of the ratios
\begin{equation}
\label{eq:alpha}
\alpha_n = \frac{a_n}{2gq^{n/2}} \in [-1,1], 
\qquad n =1,2, \ldots\,. 
\end{equation}
Under a certain additional condition of multiplicative 
independence of so-called {\it Frobenius eigenvalues\/}
of $\cC$ we obtain an asymptotic formula for the distribution
function of the ratios~\eqref{eq:alpha}. By a result
of E.~Kowalski~\cite{Kow}, the additional condition needed for our proofs to work is satisfied for 
a wide class of curves. We are also grateful to Nick Katz 
for the observation that a result of N.~Chavdarov~\cite{Chav}
can also be used to show that the desired property 
holds for a ``typical'' curve. 

 Here, we also show that this additional condition holds
for so called {\it ordinary\/} 
elliptic curves and ordinary smooth curves of genus $g=2$ whose Jacobians are absolutely  simple. The latter result can be of independent interest.

In particular, for $g=1$, that is, 
when $\cC= \cE$ is an ordinary elliptic curve, 
our result implies that the distribution of the ratios~\eqref{eq:alpha} 
is not governed by $\mu_{ST}(\beta, \gamma)$ but rather 
by a different distribution function
\begin{equation}
\label{eq:gamma}
\lambda_1(\beta,\gamma) = \frac{1}{\pi} \int_{\max\{0,\beta\}}^{\min\{1,\gamma\}}
(\sqrt{1-{\alpha^2}})^{-1}d\, \alpha.
\end{equation}
We also remark that for supersingular elliptic curves 
we have $a_n =0$ for every odd $n$ 
(and $a_n = 2q^{n/2}$ for every even $n$).

Throughout this paper, the implied constants
in  the symbols `$O$' and `$\ll$'
may depend on the base field $\F_q$
(we recall that
$A\ll B$ and $B\gg A$ are equivalent to $A=O(B)$).

\section{Main Results} 

\subsection{Frobenius Angles}

We refer to~\cite{Lor} for a background on curves and their zeta-functions.

For    a smooth projective curve $\cC$  over the finite field $\F_q$ we define the 
zeta-function of $C$ as 
$$
Z(T)=\exp\(\sum_{n=1}^{\infty}\#\cC(\F_{q^n})\frac{T^n}{n}\).
$$
It is well-known that if $\cC$ is of genus $g \ge 1$ then 
$$
Z(T)=\frac{P(T)}{(1-t)(1-qt)},
$$
where $P(T)$ is a polynomial of degree $2g$ with integer coefficients.

Furthermore we have
$$
P(t)=\prod_{j=1}^{2g}(1-\tau_j T), 
$$
where 
$\tau_1, \ldots, \tau_{2g}$ are called the {\it 
Frobenius  eigenvalues\/} and satisfy 
\begin{equation}
\label{eq:Frob}
|\tau_j|  = q^{1/2}, \quad \tau_{j+g} = \btau_j, \qquad 
j =1, \ldots, g,
\end{equation}
(where $\btau$ means complex conjugate of $\tau$), 
see~\cite[Section~VIII.5]{Lor}.

If $C$ is considered over the degree $m$ extension of $\F_q$, then
\begin{equation}
\label{eq:rat fun}
Z_m(T)=\exp\(\sum_{n=1}^{\infty}\#\cC(\F_{q^{mn}})\frac{T^n}{n}\) 
= \frac{P_m(T)}{(1-T)(1-qT)},
\end{equation}
where
$$
P_m(T)=\prod_{j=1}^{2g}(1-\tau_j^m T). 
$$
It is also well-known that
\begin{equation}
\label{eq: Card}
  \# \cC(\F_{q^n}) = q^n + 1 - \sum _{j=1}^{2g}\tau_j^n, 
\end{equation}
which underlies our method. 

Furthermore, we write~\eqref{eq:Frob} as
\begin{equation}
\label{eq:theta} 
\tau_j =  q^{1/2} e^{\pi i \vartheta_j} \mand
\tau_{j+g}=  q^{1/2} e^{-\pi i \vartheta_j},
\end{equation} 
with some $\vartheta_j \in [0,1]$, $j =1, \ldots, g$, which we call
{\it Frobenius angles\/}. (sometimes $\pi\vartheta_j$ is called a Frobenius angle)  

We show that if $1, \vartheta_1, \ldots, \vartheta_g$ are linearly 
independent over $\Z$, or alternatively $q^{1/2}, \tau_1, \ldots, \tau_g$
are multiplicatively independent over $\Z$, then the ratios~\eqref{eq:alpha}
are distributed in accordance with the 
distribution function $\lambda_g(\beta,\gamma) $
which for $g=1$ is defined by~\eqref{eq:gamma} and then recursively
as 
\begin{equation}
\label{eq:gamma g}
\lambda_g(\beta,\gamma) = 
\frac{1}{\pi} \int_0^1 
\lambda_{g-1}\(g\beta-\cos( \pi \alpha),  g\gamma-\cos( \pi \alpha)\)d\, \alpha
 \end{equation}
and give an estimate on the error term. 

\subsection{Distribution of the Number of Points in Extensions}

For a fixed absolutely irreducible curve 
$\cC$ over $\F_q$,
let $T_{\beta,\gamma}(N)$ be the number 
of ratios $\alpha_n \in [\beta,\gamma]$ for $n=1, \ldots, N$.

We say that real numbers 
$\psi_1, \ldots, \psi_s$  are linearly independent
modulo $1$ if and only if $1, \psi_1, \ldots, \psi_s$  are linearly  independent over $\Z$.

\begin{theorem}
\label{thm:IndivBound}
Suppose $\cC$ is a smooth absolutely irreducible curve  of genus $g \ge 1$ 
over $\F_q$ such that $\vartheta_1, \ldots, \vartheta_g$ are linearly 
independent modulo $1$. Then there is a constant $\eta>0$ depending only on $q$ and $g$ such that uniformly over $-1 \le \beta \le \gamma \le 1$ we have
$$
T_{\beta,\gamma}(N) = \lambda_g(\beta,\gamma) N + 
O(N^{1-\eta}),
$$
where $\lambda_g$ is defined by~\eqref{eq:gamma g}. 
\end{theorem}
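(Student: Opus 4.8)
The plan is to recast $T_{\beta,\gamma}(N)$ as an equidistribution count on a torus and then make that equidistribution effective by means of lower bounds for linear forms in logarithms of algebraic numbers. By~\eqref{eq: Card}, \eqref{eq:Frob} and~\eqref{eq:theta},
\[
a_n=\sum_{j=1}^{2g}\tau_j^{\,n}=\sum_{j=1}^{g}\bigl(\tau_j^{\,n}+\btau_j^{\,n}\bigr)=2q^{n/2}\sum_{j=1}^{g}\cos(\pi n\vartheta_j);
\]
hence, writing $\psi_j=\vartheta_j/2$, $\boldsymbol\psi=(\psi_1,\dots,\psi_g)$, and letting $F(y_1,\dots,y_g)=\tfrac1g\sum_{j=1}^{g}\cos(2\pi y_j)$ be the corresponding fixed real-analytic function on the torus $\R^g/\Z^g$, we have $\alpha_n=F(\vec y_n)$, where $\vec y_n$ is the image of $n\boldsymbol\psi$ in $\R^g/\Z^g$. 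Thus $T_{\beta,\gamma}(N)$ is precisely the number of $n\le N$ for which $\vec y_n$ lies in $\cR:=F^{-1}([\beta,\gamma])$, and the problem becomes that of counting visits of the orbit $(\vec y_n)_{n\le N}$ to $\cR$ with an explicit error term.

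The linear independence hypothesis forces every $\vartheta_j$ into $(0,1)$, so each $\tau_j$ is genuinely complex, and it implies that for every nonzero $\vec h=(h_1,\dots,h_g)\in\Z^g$ the number $\Theta_{\vec h}:=h_1\vartheta_1+\dots+h_g\vartheta_g$ is not an integer, since an integer value would give a nontrivial $\Z$-relation among $1,\vartheta_1,\dots,\vartheta_g$; consequently $\vec h\cdot\boldsymbol\psi=\tfrac12\Theta_{\vec h}$ is bounded away from $\Z$, so $\bigl|\sum_{n\le N}e^{2\pi i n\,\vec h\cdot\boldsymbol\psi}\bigr|\ll\|\vec h\cdot\boldsymbol\psi\|^{-1}$, where $\|\cdot\|$ denotes distance to the nearest integer. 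The crucial quantitative input is a power-type lower bound $\|\vec h\cdot\boldsymbol\psi\|\gg(\max_j|h_j|)^{-\kappa}$: since $e^{2\pi i\psi_j}=\tau_j/q^{1/2}$ is an algebraic number on the unit circle of degree $O(1)$, logarithmic height $O(\log q)$, and not a root of unity (as $\vartheta_j\notin\Q$), the quantity $\|\vec h\cdot\boldsymbol\psi\|$ is, up to a harmless factor, the absolute value of a nonzero linear form $h_1\log(\tau_1/q^{1/2})+\dots+h_g\log(\tau_g/q^{1/2})+c\log(-1)$ ($c\in\Z$, $|c|\ll\max_j|h_j|$) in logarithms of algebraic numbers, so Baker's theorem applies and yields such a bound with $\kappa$ governed by the bounded degrees and heights of the $\tau_j/q^{1/2}$, hence by $q$ and $g$ alone. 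Inserting $\bigl|\sum_{n\le N}e^{2\pi i n\,\vec h\cdot\boldsymbol\psi}\bigr|\ll(\max_j|h_j|)^{\kappa}$ into the Erdos--Turan--Koksma inequality gives, for the discrepancy $D_N$ of $\vec y_1,\dots,\vec y_N$,
\[
D_N\ll\frac1H+\frac{H^{\kappa}(\log H)^{g}}{N}\qquad(H\ge2),
\]
and the choice $H=N^{1/(\kappa+1)}$ produces $D_N\ll N^{-\eta_0}$ for some $\eta_0=\eta_0(q,g)>0$.

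To transfer this to the curved region $\cR=F^{-1}([\beta,\gamma])$, one uses that $\partial\cR$ is contained in $F^{-1}(\{\beta\})\cup F^{-1}(\{\gamma\})$, that is, in at most two level sets of the fixed smooth function $F$, whose $(g-1)$-dimensional measures are bounded uniformly in the level (and tend to $0$ as the level approaches $\pm1$, where these level sets shrink to points). Hence, for each $\delta>0$, $\cR$ can be sandwiched between two finite unions of $\delta$-cubes whose symmetric difference has measure $O(\delta)$; applying the discrepancy estimate cube by cube gives
\[
\bigl|T_{\beta,\gamma}(N)-N\,\mathrm{meas}(\cR)\bigr|\ll N\delta+N\delta^{-g}D_N,
\]
and optimising over $\delta$ bounds the right-hand side by $N\,D_N^{1/(g+1)}\ll N^{1-\eta}$ with $\eta=\eta_0/(g+1)$, uniformly over $-1\le\beta\le\gamma\le1$. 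Finally $\mathrm{meas}(\cR)=\lambda_g(\beta,\gamma)$: the measure of $\cR$ equals the probability that $\tfrac1g\sum_{j=1}^{g}\cos(2\pi Y_j)\in[\beta,\gamma]$ for independent uniform $Y_1,\dots,Y_g$, and conditioning on one of the coordinates --- whose contribution $\cos(2\pi Y_j)$ has density $(\pi\sqrt{1-\alpha^2})^{-1}$ on $[-1,1]$ --- gives the base case~\eqref{eq:gamma} and, by induction on $g$, the recursion~\eqref{eq:gamma g}.

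The main obstacle is the effective lower bound for $\vec h\cdot\boldsymbol\psi$ modulo $1$. Everything else is soft --- Weyl sums, the Erdos--Turan--Koksma inequality, and the elementary geometry of the level sets of $F$ --- and purely qualitative equidistribution already gives $T_{\beta,\gamma}(N)=\lambda_g(\beta,\gamma)N+o(N)$; it is only the quantitative transcendence input of Baker type, with constants depending solely on the bounded degrees and heights of the algebraic numbers $\tau_j/q^{1/2}$ and hence only on $q$ and $g$, that upgrades this to the power saving $O(N^{1-\eta})$. A secondary technical point is keeping the cube-sandwiching uniform as $\beta,\gamma$ approach $\pm1$, which is exactly why the vanishing of the level-set measures there is used.
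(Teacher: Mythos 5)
Your proposal is correct and follows essentially the same route as the paper: reduce $T_{\beta,\gamma}(N)$ to counting visits of the Kronecker sequence $(n\vartheta_1,\dots,n\vartheta_g)$ to a region with piecewise smooth boundary, obtain a power-saving discrepancy bound from Baker's theorem on linear forms in logarithms (with uniformity in $q,g$), and transfer from boxes to the curved level-set region before identifying the limiting measure with $\lambda_g$. The only differences are cosmetic --- you invoke Erd\H{o}s--Tur\'an--Koksma and an explicit cube-sandwiching argument where the paper cites the corresponding ready-made lemmas of Drmota--Tichy and of Laczkovich/Niederreiter--Wills together with Weyl's tube-volume estimate.
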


\subsection{Linear Independence of Frobenius Angles}

We say that  $\cC$ is an ordinary curve if and only if 
at least half of the Frobenius  eigenvalues
$\tau_1, \btau_1, \ldots, \tau_g, \btau_g$ 
are $p$-adic units where $p$ is the characteristic 
of $\F_q$, see~\cite[Definition~3.1]{Howe} for several 
equivalent definitions. Notice that all the Frobenius eigenvalues are 
algebraic numbers. So whenever they are considered as $p$-adic numbers it is
implied that we have chosen an embedding of $\overline{\mathbb{Q}}$ in 
${\overline{\mathbb{Q}}}_p$ allowing us to view Frobenius eigenvalues as 
$p$-adic numbers.

\begin{theorem}
\label{thm:LinIndep}
Suppose $\cC$ is a smooth projective curve of genus $g \le 2$ 
over $\F_q$. Furthermore suppose that $\cC$ is either an ordinary elliptic curve
or is a curve of genus $g=2$ which has an absolutely simple Jacobian. 
Then the Frobenius angles $\vartheta_1, \ldots, \vartheta_g$ are linearly 
independent modulo $1$. 
\end{theorem}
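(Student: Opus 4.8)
The plan is to separate the two cases. For an ordinary elliptic curve the statement is essentially trivial: here $g=1$, so we must show that $\vartheta_1$ alone is linearly independent modulo $1$, i.e.\ that $\vartheta_1 \notin \Q$. Equivalently, writing $\tau_1 = q^{1/2}e^{\pi i\vartheta_1}$, we must show $\tau_1$ is not of the form $q^{1/2}\zeta$ with $\zeta$ a root of unity. If it were, then some power $\tau_1^r$ would equal $q^{r/2}$ (or $-q^{r/2}$), which forces a power of the Frobenius to act as a scalar; for an ordinary elliptic curve this is impossible because ordinariness means $\tau_1$ is a $p$-adic unit while $q^{1/2}$ (to the relevant power) is not, so $\tau_1^r = \pm q^{r/2}$ would equate a $p$-adic unit with a non-unit. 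Alternatively one invokes the classical fact that an ordinary elliptic curve over $\F_q$ has $\Q(\tau_1)$ an imaginary quadratic field in which $\tau_1$ generates a non-real element, and $\tau_1/\btau_1$ is not a root of unity; I would spell this out in a line or two using the $p$-adic valuation argument, which is the cleanest.

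For the genus-$2$ case the argument is more substantial. Suppose $\cC$ has absolutely simple Jacobian $J$, and suppose for contradiction that $1,\vartheta_1,\vartheta_2$ are linearly dependent over $\Z$; unwinding~\eqref{eq:theta}, this means there is a nonzero integer vector $(a,b)$ and an integer $c$ with $a\vartheta_1 + b\vartheta_2 = c$, which after exponentiating says that a suitable monomial $\tau_1^{a}\tau_2^{b}$ (allowing also the conjugates $\btau_1,\btau_2$ via sign choices) equals a rational power of $q$ times a root of unity — more precisely, replacing $\cC$ by an extension $\F_{q^m}$ so that all roots of unity involved become trivial (which is legitimate since linear dependence of the $\vartheta_j$ modulo $1$ is inherited by extensions, up to the same relation), we get a multiplicative relation $\tau_1^{a}\tau_2^{b} = q^{(a+b)/2}$ among the Frobenius eigenvalues of the base-changed curve. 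The key step is then to show that such a relation is incompatible with $J$ being absolutely simple and ordinary. The standard tool here is the structure of the endomorphism algebra: for an absolutely simple abelian surface, $\mathrm{End}^0(J)$ is either $\Q$, a real quadratic field, a quartic CM field, or (in the non-ordinary case, which we have excluded) a quaternion algebra. For an ordinary absolutely simple Jacobian of dimension $2$, $\Q(\tau_1)$ is a quartic CM field $K$ with $\tau_1\btau_1 = q$, and the four Frobenius eigenvalues are exactly the images of $\tau_1$ under the four embeddings $K \hookrightarrow \C$. A multiplicative relation $\tau_1^{a}\tau_2^{b} = q^{(a+b)/2}$ then translates into an equality of algebraic numbers in $K$ that must hold under all embeddings, which constrains $\tau_1$ to lie in a proper CM subfield — forcing $K$ to contain an imaginary quadratic subfield over which $\tau_1$ is already defined, and hence $J$ to be isogenous to a product or a power of a lower-dimensional abelian variety with CM, contradicting absolute simplicity. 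I would run this through the $p$-adic valuations of the $\tau_j$ (two of them units, two with valuation $v(q)$, by ordinariness) to pin down exactly which monomials can be rational powers of $q$, and then invoke the classification of quartic CM fields and their subfields to rule out the remaining possibilities.

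The main obstacle I anticipate is the bookkeeping in the genus-$2$ case: one has to track the interplay between (i) the archimedean constraint $|\tau_j| = q^{1/2}$, (ii) the $p$-adic constraint coming from ordinariness, and (iii) the Galois action on the CM field, and to check that \emph{every} nonzero integer relation $a\vartheta_1 + b\vartheta_2 \equiv 0 \pmod 1$ is excluded, not just some. A clean way to organize this is to note that linear independence modulo $1$ of $\vartheta_1,\vartheta_2$ is equivalent to the multiplicative group generated by $\tau_1,\tau_2,q^{1/2}$ having rank $3$ (as a subgroup of $\overline{\Q}^\times$ modulo torsion), and that this rank is determined by the Galois module structure of the Frobenius; for an ordinary absolutely simple abelian surface this module is irreducible of the expected size, which gives the full-rank conclusion. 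I would cite the relevant facts about CM types and the Shimura–Taniyama formula (or Howe's framework, already referenced in the paper) rather than re-derive them, and reserve the detailed argument for the case analysis on the possible degrees of $\Q(\tau_1)$ over $\Q$.
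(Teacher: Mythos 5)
Your genus-$1$ argument is the same $p$-adic valuation argument as the paper's (its Lemma~\ref{lem:MultIndep-general}): ordinarity forces exactly one of $\tau_1,\btau_1$ to be a $p$-adic unit, while a rational angle would force $\tau_1^{2s}=q^s$, making neither a unit. That part is fine.

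The genuine gap is in the genus-$2$ case: you assume throughout that the curve is \emph{ordinary} (``two of them units, two with valuation $v(q)$, by ordinariness'', and ``the non-ordinary case, which we have excluded''), but the theorem's hypothesis for $g=2$ is only that the Jacobian is absolutely simple --- ordinarity is \emph{not} assumed. A genus-$2$ curve is ordinary, supersingular, or of $p$-rank $1$. The supersingular case is vacuous here (such a Jacobian is geometrically isogenous to a product of supersingular elliptic curves, hence not absolutely simple), but absolutely simple abelian surfaces of $p$-rank $1$ (the ``$K3$ type'' / almost ordinary case) do exist, and your valuation bookkeeping breaks down for them: one pair of eigenvalues has valuations $0$ and $v(q)$, the other pair has valuation $v(q)/2$ each, so the step deducing $a+b=0$ (or $a=b$) from the unit condition no longer goes through. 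The paper disposes of this case by citing Zarhin's theorem on abelian varieties of $K3$ type; your proof needs either that input or a separate argument, and as written it silently omits an entire case of the theorem.

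For the ordinary genus-$2$ case your route is workable and close in spirit to the paper's, though packaged differently: the paper squares the relation to get $\tau_1^{2u}\tau_2^{2v}=q^{u+v}$ directly (no base extension needed to kill roots of unity), uses the $p$-adic units to force $u+v=0$ or $u=v$, and concludes $\tau_1^{2v}=\tau_2^{2v}$, so $P_{2v}(T)$ has a double root and is reducible --- contradicting the Honda--Tate fact (Howe, Theorem~3.3) that $P_m(T)$ stays irreducible for all $m$ when the Jacobian is ordinary and absolutely simple. Your ``proper CM subfield'' endgame is essentially the same contradiction in Galois-theoretic clothing, but it is left at the level of a plan (``I would cite the relevant facts \ldots''); the irreducibility-of-$P_m$ formulation is the cleaner way to close it, and you should make one of the two precise rather than gesturing at the classification of quartic CM fields.
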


\section{Proof of Theorem~\ref{thm:IndivBound}}

\subsection{Preparation}
 
 We see from~\eqref{eq: Card} and~\eqref{eq:theta} that
\begin{equation}
\label{eq:Frob_roots}
\alpha_n = \frac{1}{g} \sum_{j=1}^{g}\cos( \pi \vartheta_j n).
\end{equation}

Therefore, denoting via $\cV_g(\beta, \gamma)$
the $g$-dimensional domain consisting of 
points $(\psi_1, \ldots, \psi_g) \in [0,1]^g$ 
such that 
$$
\beta \le \frac{1}{g} \sum_{j=1}^{g}\cos( \pi \psi_j) \le \gamma
$$
we see that 
\begin{equation}
\label{eq:T V}
T_{\beta,\gamma}(N)  = 
\# \{n =1, \ldots, N~:~ 
 (\vartheta_1n , \ldots, \vartheta_gn ) \in \cV_g(\beta, \gamma)\}. 
\end{equation}
Thus it is natural that we use tools from the theory of uniformly distributed 
sequences to estimate $T_{\beta,\gamma}(N)$.

\subsection{Background on the Uniform Distribution}

For a finite set $\cF \subseteq [0,1]^s$ of the $s$-dimensional
unit cube, we define its {\it discrepancy
with respect to a domain $\varXi \subseteq [0,1]^s$\/}
as
$$
\varDelta(\cF ,\varXi) = \left| \frac{ \#\{ \vec{f}\in\cF :\ \vec{f}\in
\varXi\} }{\#\cF} - \mu(\varXi)\right|,
$$
where $\mu$ is the Lebesgue measure on $ [0,1]^s$.

We now define  the {\it  discrepancy\/} of $\cF $ as
$$
D(\cF) = \sup_{\varPi  \subseteq [0,1]^s}  \varDelta(\cF ,\varPi) ,
$$
where the supremum is taken over all boxes $\varPi = [\alpha_1,
\beta_1) \times \ldots \times [\alpha_s, \beta_s) \subseteq [0,1]^s$,
see~\cite{DrTi,KuNi}.

We define the  distance between a vector $\vec{u} \in [0,1]^s$
and a set $\varXi\subseteq [0,1]^s $  by
$$
\dist(\vec{u},\varXi) = \inf_{\vec{w} \in\varXi}
\|\vec{u} - \vec{w}\|,
$$
where  $\|\vec{v}\|$ is the Euclidean norm of $\vec{v}$. Given
$\varepsilon >0$ and a domain  $\varXi \subseteq [0,1]^s $ we define
the  sets
$$
\varXi_\varepsilon^{+} = \left\{ \vec{u} \in  [0,1]^s \backslash
\varXi \ : \ \dist(\vec{u},\varXi) < \varepsilon \right\}
$$
and
$$
\varXi_\varepsilon^{-} = \left\{ \vec{u} \in \varXi \ : \
\dist(\vec{u},[0,1]^s \backslash \varXi )  < \varepsilon  \right\} .
$$

Let $h(\varepsilon)$ be an arbitrary increasing function defined for
$\varepsilon >0$ such that
$$
\lim_{ \varepsilon \to 0}h(\varepsilon) = 0.
$$
As in~\cite{Lac,NiWi}, we define the
class $\cS_h$ of  domains  to include domains  $\varXi \subseteq [0,1]^s $ for which
$$
\mu\(\varXi_\varepsilon^{+} \)\le h(\varepsilon)
\qquad \mbox{and}
\qquad
\mu\(\varXi_\varepsilon^{-} \)\le h(\varepsilon) .
$$

A relation  between $D(\cF)$ and $\varDelta(\cF ,\varXi)$
for $\varXi \in \cS_h$ is given by the following inequality of~\cite{Lac}
(see also~\cite{NiWi}).

\begin{lemma}
   \label{lem:LNW bound} For any domain  $\varXi \in \cS_h$, we have
$$
\varDelta(\cF ,\varXi) \ll h\(s^{1/2} D(\cF )^{1/s}\)  .
$$
\end{lemma}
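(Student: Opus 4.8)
The plan, which is essentially the argument of Lacoste and of Niederreiter--Wills \cite{Lac,NiWi}, is to bound $\varDelta(\cF,\varXi)$ by the box discrepancy $D(\cF)$: squeeze $\varXi$ between two unions of small axis-parallel cubes, count $\cF$-points in such a union by a finite-difference expansion that keeps the number of relevant boxes small, and finally tune the side length. Fix a mesh $d\in(0,1]$ with $1/d\in\Z$ (to be chosen $\asymp D(\cF)^{1/s}$), and partition $[0,1)^{s}$ into the $d^{-s}$ half-open cubes of side $d$ with vertices in $(d\Z)^{s}$. Let $B^{-}$ be the union of those grid cubes contained in $\varXi$ and $B^{+}$ the union of those meeting $\varXi$, so $B^{-}\subseteq\varXi\subseteq B^{+}$. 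Each grid cube occurring in $B^{+}\setminus B^{-}$ meets both $\varXi$ and its complement, and since two points of one cube are at distance $<d\sqrt{s}$, such a cube lies in $\varXi_{d\sqrt{s}}^{+}\cup\varXi_{d\sqrt{s}}^{-}$; summing over these pairwise disjoint cubes and using $\varXi\in\cS_{h}$ gives $\mu(B^{+})-\mu(B^{-})=\mu(B^{+}\setminus B^{-})\le 2h(d\sqrt{s})$. Writing $A(\varOmega)=\#\{\vec{f}\in\cF:\ \vec{f}\in\varOmega\}$ and combining $A(B^{-})\le A(\varXi)\le A(B^{+})$ with $\mu(B^{-})\le\mu(\varXi)\le\mu(B^{+})$, we obtain
$$
\varDelta(\cF,\varXi)\le\max_{\pm}\Bigl|\frac{A(B^{\pm})}{\#\cF}-\mu(B^{\pm})\Bigr|+2h(d\sqrt{s}).
$$

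Next I would estimate $A(B)/\#\cF-\mu(B)$ for an arbitrary union $B$ of grid cubes. Expanding each cube's indicator as $\mathbf{1}_{\vec{a}+[0,d)^{s}}=\prod_{i=1}^{s}\bigl(\mathbf{1}_{[0,a_{i}+d)}-\mathbf{1}_{[0,a_{i})}\bigr)$ and collecting the resulting anchored boxes $[\vec{0},v)$ over the grid vertices $v$ gives $\mathbf{1}_{B}=\sum_{v}c_{v}\mathbf{1}_{[\vec{0},v)}$ with integers $|c_{v}|\le 2^{s}$; the decisive point is that $c_{v}=0$ unless $v$ is a \emph{boundary vertex} of $B$, i.e.\ incident both to a cube in $B$ and to a cube not in $B$, because otherwise the contributing signs split into cancelling pairs $1+(-1)$ over the coordinate directions. (The same vanishing holds on $\partial([0,1]^{s})$, the one exception being $v=(1,\dots,1)$, whose box $[\vec{0},v)=[0,1)^{s}$ has zero discrepancy once one assumes, as one may, that $\cF\subseteq[0,1)^{s}$; vertices with a zero coordinate give empty boxes.) Applying the definition of $D(\cF)$ to each box $[\vec{0},v)$ then yields
$$
\Bigl|\frac{A(B)}{\#\cF}-\mu(B)\Bigr|\le\sum_{v}|c_{v}|\,D(\cF)\le 2^{s}\cdot\#\{\text{boundary vertices of }B\}\cdot D(\cF).
$$
Finally, every boundary vertex of $B^{-}$ is a vertex of a grid cube contained in $\varXi_{2d\sqrt{s}}^{-}$: if $C\subseteq\varXi$ is incident, across that vertex, to a cube not contained in $\varXi$, then every point of $C$ lies within $2d\sqrt{s}$ of the complement of $\varXi$; the analogous argument exhibits each boundary vertex of $B^{+}$ on a grid cube in $\varXi_{2d\sqrt{s}}^{+}$. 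By disjointness and $\varXi\in\cS_{h}$ there are at most $2^{s}h(2d\sqrt{s})/d^{s}$ boundary vertices of $B^{\pm}$.

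Combining the three displays gives
$$
\varDelta(\cF,\varXi)\ll_{s}\frac{h(2d\sqrt{s})}{d^{s}}\,D(\cF)+h(d\sqrt{s}),
$$
and I would conclude by taking $d=\tfrac12 D(\cF)^{1/s}$ (rounding $1/d$ up to an integer changes only the implied constants; if $D(\cF)=0$, letting $d\to 0^{+}$ already gives $\varDelta(\cF,\varXi)=0$). Then $D(\cF)/d^{s}=2^{s}$ and $2d\sqrt{s}=s^{1/2}D(\cF)^{1/s}$, while $d\sqrt{s}\le s^{1/2}D(\cF)^{1/s}$, so, $h$ being increasing, both terms are $\ll_{s}h\bigl(s^{1/2}D(\cF)^{1/s}\bigr)$, which is the asserted bound.

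The step I expect to be the real work is the second one. It requires showing that the finite-difference expansion of $\mathbf{1}_{B}$ really does have vanishing coefficients off $\partial B$ — the cancellation of the alternating sign sum around an interior grid vertex, plus the mildly fussy bookkeeping at grid vertices on $\partial([0,1]^{s})$ — and then verifying that every boundary vertex of $B^{\pm}$ lies on a grid cube inside one of the thin shells $\varXi_{2d\sqrt{s}}^{\pm}$. The shell radius $2d\sqrt{s}$ (the diameter of the union of the two cubes meeting at a boundary vertex) is what forces the normalisation $d=\tfrac12 D(\cF)^{1/s}$ rather than $d=D(\cF)^{1/s}$, so that every argument of $h$ stays $\le s^{1/2}D(\cF)^{1/s}$ and the monotonicity of $h$ applies. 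All the rest is routine bookkeeping with the measure bounds defining $\cS_{h}$.
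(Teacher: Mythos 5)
Your proof is correct. Note, though, that the paper offers no proof of this lemma at all: it is quoted verbatim from Laczkovich \cite{Lac} (whom you miscall ``Lacoste'') and Niederreiter--Wills \cite{NiWi}, so there is nothing internal to compare against; what you have written is a faithful and complete reconstruction of the standard argument from those sources. All the delicate points check out: the finite-difference coefficients $c_v$ do vanish at every grid vertex all of whose existing incident cubes lie in $B$ as soon as some coordinate $v_i$ satisfies $0<v_i<1$ (the alternating sum factors coordinatewise), leaving only the harmless vertex $(1,\dots,1)$; each boundary vertex of $B^{\pm}$ does sit on a grid cube inside the shell $\varXi_{2d\sqrt{s}}^{\pm}$, so the vertex count $\le 2^{s}h(2d\sqrt{s})/d^{s}$ is justified by disjointness of the cubes; and the choice $d=\tfrac12 D(\cF)^{1/s}$ (rounded so that $1/d\in\Z$) makes both terms $\ll_s h\bigl(s^{1/2}D(\cF)^{1/s}\bigr)$. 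Two cosmetic caveats: your implied constant depends on $s$, which is consistent with how the lemma is used here (it is applied with $s=g$ fixed, and the paper's conventions allow dependence on $q$ and $g$); and the reduction to $\cF\subseteq[0,1)^{s}$ deserves the one line you give it, since points with a coordinate equal to $1$ are invisible to half-open boxes --- in the application the points are fractional parts, so this is vacuous.
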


Finally,  the following bound, which is a special case
of a more general result of H.~Weyl~\cite{Weyl}
shows that if  $\varXi$ has a piecewise smooth boundary
then $\varXi \in \cS_h$ for some   function
$h(\varepsilon) = O\(\varepsilon\)$.

\begin{lemma}
   \label{lem:Weyl bound} For any domain   $\varXi \in [0,1]^s$ with
piecewise smooth boundary, we
have
$$
\mu\(\varXi_\varepsilon^{\pm }\) = O(\varepsilon).
$$
\end{lemma}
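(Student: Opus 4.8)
The statement to prove is Lemma~\ref{lem:Weyl bound}: if $\varXi \subseteq [0,1]^s$ has piecewise smooth boundary, then $\mu(\varXi_\varepsilon^{\pm}) = O(\varepsilon)$. The plan is to reduce the estimate to a statement about the $\varepsilon$-neighborhood of the topological boundary $\partial \varXi$, and then to control the measure of that neighborhood using the fact that $\partial \varXi$ is a finite union of pieces of smooth hypersurfaces. First I would observe that both $\varXi_\varepsilon^{+}$ and $\varXi_\varepsilon^{-}$ are contained in the open $\varepsilon$-collar $N_\varepsilon(\partial \varXi) = \{\vec u \in [0,1]^s : \dist(\vec u, \partial\varXi) < \varepsilon\}$: indeed, if $\vec u \in \varXi_\varepsilon^{+}$ then $\vec u \notin \varXi$ but there is a point of $\varXi$ within $\varepsilon$, so the segment joining them crosses $\partial\varXi$, placing a boundary point within $\varepsilon$ of $\vec u$; symmetrically for $\varXi_\varepsilon^{-}$. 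Hence it suffices to show $\mu(N_\varepsilon(\partial\varXi)) = O(\varepsilon)$.

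\textbf{Main estimate.} By hypothesis $\partial\varXi$ is a \emph{piecewise smooth} hypersurface: it is a finite union $\partial\varXi = \bigcup_{k=1}^{m} S_k$ where each $S_k$ is (the closure of) a relatively open subset of a $C^1$ hypersurface in $\R^s$, and each $S_k$ is compact (being a closed subset of the compact set $[0,1]^s$). Since $N_\varepsilon(\partial\varXi) \subseteq \bigcup_k N_\varepsilon(S_k)$, it is enough to prove $\mu(N_\varepsilon(S_k)) = O(\varepsilon)$ for each fixed $k$, with the implied constant depending on $S_k$ (there are finitely many pieces). For a single compact piece $S$ of a $C^1$ hypersurface, one covers $S$ by finitely many coordinate charts in which $S$ is a graph $x_s = \varphi(x_1,\dots,x_{s-1})$ with $\varphi$ of class $C^1$ on a compact domain, hence Lipschitz with some constant $L$. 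A point within distance $\varepsilon$ of the graph then has its last coordinate within $\varepsilon\sqrt{1+L^2}$ of $\varphi$ evaluated at (a nearby value of) its first $s-1$ coordinates; a direct Fubini argument in these coordinates shows the $\varepsilon$-neighborhood of the graph has $s$-dimensional measure $\le C\varepsilon$ with $C$ depending only on $L$ and the diameter of the chart domain. Summing over the finitely many charts and the finitely many pieces $S_k$ gives $\mu(N_\varepsilon(\partial\varXi)) = O(\varepsilon)$, and therefore $\mu(\varXi_\varepsilon^{\pm}) = O(\varepsilon)$, which is the required bound (and in particular places $\varXi$ in the class $\cS_h$ with $h(\varepsilon) = O(\varepsilon)$).

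\textbf{Main obstacle.} The routine part is the Fubini/graph computation; the point requiring care is the passage from ``piecewise smooth boundary'' to the concrete finite-graph description, i.e.\ making precise the regularity hypothesis so that each boundary piece is genuinely Lipschitz after a rotation of coordinates, and handling the lower-dimensional strata (edges, corners where the pieces meet, and the intersections of $\partial\varXi$ with the faces of the cube $[0,1]^s$). These lower-dimensional sets are themselves contained in $\varepsilon$-neighborhoods of sets of Hausdorff dimension $\le s-2$, whose measure is $O(\varepsilon^2) = O(\varepsilon)$, so they do not affect the bound; one just has to note this explicitly rather than let it slip through. Since the lemma is quoted as a special case of Weyl's theorem~\cite{Weyl}, an acceptable alternative is simply to cite that reference for the sharp form; but the self-contained argument above is short and is the natural route if one wants to keep the paper closed.
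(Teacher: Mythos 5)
Your argument is correct, but note that the paper does not actually prove this lemma: it is stated with a bare citation to Weyl's tube-volume paper, so there is no internal proof to compare against. Your self-contained route — first showing $\varXi_\varepsilon^{\pm}\subseteq\{\vec u: \dist(\vec u,\partial\varXi)<\varepsilon\}$ via the segment-crossing argument, then covering the piecewise smooth boundary by finitely many compact Lipschitz graphs and bounding the $\varepsilon$-collar of each by Fubini — is the standard elementary proof and is sound; it has the advantage of keeping the paper closed and of making visible that the implied constant depends only on the Lipschitz data of $\partial\varXi$ (in effect, on its $(s-1)$-dimensional Minkowski content), which is all that Lemma~\ref{lem:LNW bound} needs. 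Two small points of care: in the Fubini step you should extend each graph function $\varphi$ to a Lipschitz function on all of $\R^{s-1}$ (McShane extension) so that the slice estimate $|x_s-\varphi(x')|\le(1+L)\varepsilon$ makes sense for every $x'$ within $\varepsilon$ of the chart domain; and your remark that the lower-dimensional strata have neighborhoods of measure $O(\varepsilon^2)$ should be justified by their being finite unions of compact pieces of smooth $(s-2)$-dimensional submanifolds rather than by Hausdorff dimension alone (Hausdorff dimension $\le s-2$ does not by itself control Minkowski content). Neither issue affects the validity of the proof.
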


Clearly the domain $\cV_g(\beta, \gamma)$ satisfies the condition of 
Lemma~\ref{lem:Weyl bound}. Thus we see from Lemma~\ref{lem:Weyl bound}
that we need to estimate the discrepancy of the points
\begin{equation}
\label{eq:Points}
 (\vartheta_1 n , \ldots, \vartheta_gn ), \qquad
 n =1, \ldots, N.
\end{equation}

This is a well-studied question, however
the answer depends on the Diophantine 
properties of $\vartheta_1,  \ldots, \vartheta_g$, 
see~\cite{DrTi,KuNi}. More specifically, 
we denote by $\R^+$ the set of positive real numbers 
and by $\|z\|$ the distance between a real $z$ and the 
closest integer. We now recall~\cite[Theorem~1.80]{DrTi}

\begin{lemma}
   \label{lem:Discr} Suppose that $\psi_1, \ldots, \psi_s$ 
   are linearly 
independent modulo $1$ and for some continuous 
function $\varphi(t): \R^+ \to \R^+$ such that 
$\varphi(t)/t$ is monotonically increasing for real $t\ge 1$ we
have
$$
\|k_1 \psi_1 + \ldots + k_s \psi_s\| \ge 
\frac{1}{\varphi\(\max\{|k_1|, \ldots, |k_s|\}\)}
$$
for any non-zero  vector $(k_1, \ldots, k_s) \in \Z^s$. 
Then the discrepancy $D(N)$ of the sequence
$$
(\psi_1 n , \ldots, \psi_sn ), \qquad
n =1, \ldots, N, 
$$
satisfies
$$
D(N) \ll \frac{\log N \log \varphi^{-1}(N)}{\varphi^{-1}(N)},
$$
where $ \varphi^{-1}(t)$ is the inverse function of $ \varphi(t)$. 
 \end{lemma}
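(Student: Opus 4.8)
The plan is to bound the discrepancy $D(N)$ of the sequence $(\psi_1 n,\ldots,\psi_s n)$, $n=1,\ldots,N$, by reducing it to exponential sums via the Erd\H{o}s--Tur\'an--Koksma inequality: for every integer $H\ge 1$,
$$
D(N)\ll \frac1H+\sum_{0<\|\vec k\|_\infty\le H}\frac{1}{R(\vec k)}\left|\frac1N\sum_{n=1}^{N}e\bigl((k_1\psi_1+\cdots+k_s\psi_s)n\bigr)\right|,
$$
where the outer sum is over nonzero $\vec k=(k_1,\ldots,k_s)\in\Z^s$, $R(\vec k)=\prod_{i=1}^{s}\max\{1,|k_i|\}$, $\|\vec k\|_\infty=\max_i|k_i|$, and $e(z)=\exp(2\pi i z)$. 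Summing the geometric progression in $n$ bounds the inner sum attached to $\vec k$ by $N^{-1}\min\{N,\,\|k_1\psi_1+\cdots+k_s\psi_s\|^{-1}\}$; linear independence modulo $1$ guarantees the linear form is non-integral, and the Diophantine hypothesis makes this quantitative, giving $\|k_1\psi_1+\cdots+k_s\psi_s\|\ge\varphi(\|\vec k\|_\infty)^{-1}$ and hence that the $\vec k$-term is $\ll R(\vec k)^{-1}\min\{1,\,N^{-1}\varphi(\|\vec k\|_\infty)\}$. The whole problem thus reduces to bounding
$$
\Sigma(H)=\sum_{0<\|\vec k\|_\infty\le H}\frac{1}{R(\vec k)}\,\min\bigl\{1,\,N^{-1}\varphi(\|\vec k\|_\infty)\bigr\},
$$
and then choosing $H$ appropriately; since $\varphi(t)/t$ is increasing, $\varphi$ itself is increasing, so $\varphi(\|\vec k\|_\infty)\le\varphi(H)$ throughout the range, and taking $H$ of the order of $\varphi^{-1}(N)$ will be the natural choice.

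The heart of the estimate is $\Sigma(H)$, and here the hypothesis must be exploited beyond the exponential-sum bound it already gave. I would decompose the range dyadically according to the size of the linear form and control the ``near-resonant'' vectors --- those $\vec k$ with $\|k_1\psi_1+\cdots+k_s\psi_s\|\asymp 2^{-j}$ --- using two complementary facts. First, by the hypothesis such a $\vec k$ satisfies $\varphi(\|\vec k\|_\infty)\gg 2^{j}$, hence $\|\vec k\|_\infty\gg\varphi^{-1}(2^{j})$ and so $R(\vec k)\ge\|\vec k\|_\infty\gg\varphi^{-1}(2^{j})$: near-resonant vectors carry a small weight. Second, for distinct $\vec k,\vec k'$ with $\|\vec k\|_\infty,\|\vec k'\|_\infty\le H$ the difference $\vec k-\vec k'$ is a nonzero integer vector of sup-norm at most $2H$, so the hypothesis gives $\|(k_1-k_1')\psi_1+\cdots+(k_s-k_s')\psi_s\|\ge\varphi(2H)^{-1}$; thus the residues $k_1\psi_1+\cdots+k_s\psi_s\bmod 1$ with $\|\vec k\|_\infty\le H$ are $\varphi(2H)^{-1}$-separated on $\R/\Z$, and at most $O\bigl(2^{-j}\varphi(2H)+1\bigr)$ of them lie within distance $2^{-j}$ of the origin: near-resonant vectors are rare. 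Feeding these two facts, the elementary bound $\sum_{\|\vec k\|_\infty\le H}R(\vec k)^{-1}\ll(\log H)^{s}$ used in the non-resonant range, and the monotonicity of $\varphi$ and of $\varphi(t)/t$ into the dyadic sum for $\Sigma(H)$, and then balancing against the term $1/H$ with $H$ of order $\varphi^{-1}(N)$, should yield, after bookkeeping the logarithmic losses, the bound $D(N)\ll\log N\,\log\varphi^{-1}(N)/\varphi^{-1}(N)$.

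I expect the main obstacle to be exactly this last estimate. A mechanical use of the Erd\H{o}s--Tur\'an--Koksma inequality --- bounding $\min\{1,N^{-1}\varphi(\|\vec k\|_\infty)\}\le N^{-1}\varphi(H)$ uniformly and summing --- only produces a bound of the shape $H^{-1}+N^{-1}\varphi(H)(\log H)^{s-1}$, whose optimal choice of $H$ is tied to the relation $H\varphi(H)\ll N$ rather than $H\ll\varphi^{-1}(N)$ and is therefore genuinely weaker; recovering the sharp rate depends on the dichotomy above --- resonant frequencies being simultaneously cheap (large $\|\vec k\|_\infty$, small $R(\vec k)^{-1}$) and rare ($\varphi(2H)^{-1}$-separated) --- and on retaining rather than discarding the truncation $\min\{1,\cdot\}$ across the dyadic scales. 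Carrying this out uniformly, using only the monotonicity of $\varphi(t)/t$, is the technical content of~\cite[Theorem~1.80]{DrTi}, on which the statement relies.
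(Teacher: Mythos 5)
The paper offers no proof of this lemma at all: it is quoted verbatim as Theorem~1.80 of Drmota and Tichy, so your closing deferral to that reference is exactly what the authors do, and your opening reduction via the Erd\H{o}s--Tur\'an--Koksma inequality together with the geometric-series bound for the inner exponential sum is indeed how the quoted theorem begins. To that extent there is nothing in the paper to compare against.

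Since you do sketch how the key estimate should go, however, I have to point out that the strategy you describe for it would not deliver the stated bound. Writing $L(\vec k)=k_1\psi_1+\cdots+k_s\psi_s$, your plan is to decompose dyadically by the size of $\|L(\vec k)\|$ and to use only two facts: (i) near-resonant $\vec k$ satisfy $R(\vec k)\ge\|\vec k\|_\infty\ge\varphi^{-1}(2^{j})$, and (ii) the residues $L(\vec k)\bmod 1$ are $\varphi(2H)^{-1}$-separated. This loses a power of $H$. Test it already for $s=1$ and $\varphi(t)=ct$ (a badly approximable $\alpha$), with $H\asymp\varphi^{-1}(N)\asymp N$: the class $\{0<k\le H:\|k\alpha\|\asymp 2^{-j}\}$ contains $\asymp 2^{-j}N$ integers, almost all of size $\asymp N$ rather than $\asymp\varphi^{-1}(2^{j})\asymp 2^{j}$, so your worst-case bound $|k|\ge c^{-1}2^{j}$ gives a contribution $\ll 2^{j}\cdot 2^{-j}N\cdot 2^{-j}=2^{-j}N$ per scale, hence $\asymp N$ in total and only the trivial $D(N)\ll 1$; even after optimizing the crossover against the $(\log H)^{s}$ bound in the non-resonant range one gets $D(N)\ll N^{-1/2+o(1)}$ instead of $(\log N)^{2}/N$. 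The missing idea is that the dyadic decomposition must be performed on the frequency vector, not on the value of the linear form: restrict to a block $|k_l|\in(2^{i_l-1},2^{i_l}]$ with $i=\max_l i_l$, on which $1/R(\vec k)\asymp 2^{-(i_1+\cdots+i_s)}$ is essentially constant and the residues $L(\vec k)\bmod 1$ are $\varphi(2^{i+1})^{-1}$-separated --- a separation scale matched to the block and far coarser than $\varphi(2H)^{-1}$. The spacing lemma (the $r$-th closest of $M$ points that are $\delta$-separated, none closer than $\delta$ to $0$, lies at distance $\ge r\delta/2$, whence $\sum\|x\|^{-1}\ll\delta^{-1}\log M$) then bounds each block by $\ll 2^{-(i_1+\cdots+i_s)}\varphi(2^{i+1})\log H$; summing over blocks and using the monotonicity of $\varphi(t)/t$ gives $\ll\varphi(2H)(\log H)^{2}/H$ for the whole frequency sum, and the choice $H\asymp\varphi^{-1}(N)$ yields the claimed estimate (indeed with $(\log\varphi^{-1}(N))^{2}$, which is $\le\log N\log\varphi^{-1}(N)$ since $\varphi^{-1}(N)\ll N$).
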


\subsection{Linear Forms in Logarithms and Linear Combinations of Frobenius Angles}

We  present a classical result of A.~Baker~\cite{Bak}
in  the following greatly simplified form (see also~\cite{BakWu,Mat} 
for more recent achievements, which can be used to make our 
estimates more explicit).

\begin{lemma}
 \label{lem:LinFormLog} For arbitrary 
algebraic numbers $\xi_1 \ldots, \xi_s$ 
there are   constants $C_1>0$ and $C_2> 1$ such that 
the inequality 
\begin{equation*}
\begin{split}
0<  |\xi_1^{k_1}  & \ldots   \xi_s^{k_s} -1 | \\
& \le C_1 \(\max\{|k_1|, \ldots, |k_s|\}+1\)^{-C_2}
\end{split}
\end{equation*}
has no solution in  $(k_1, \ldots, k_s) \in \Z^s\setminus(0,\ldots,0)$. 
 \end{lemma}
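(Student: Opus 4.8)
The plan is to derive this from the classical effective lower bound for linear forms in logarithms of algebraic numbers, i.e.\ Baker's theorem~\cite{Bak} (the sharper versions of~\cite{BakWu,Mat} would make $C_1,C_2$ explicit), in the form: if $\alpha_1,\dots,\alpha_r$ are nonzero algebraic numbers with fixed logarithms $\log\alpha_1,\dots,\log\alpha_r$ and $b_1,\dots,b_r\in\Z$ are such that $\Lambda=b_1\log\alpha_1+\dots+b_r\log\alpha_r\neq0$, then $|\Lambda|>c_1B^{-c_2}$ with $B=\max\{|b_1|,\dots,|b_r|,e\}$, where $c_1>0$ and $c_2>1$ depend only on $\alpha_1,\dots,\alpha_r$ and $r$. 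The only genuine subtlety in reducing Lemma~\ref{lem:LinFormLog} to this is that, in order to control $|\xi_1^{k_1}\cdots\xi_s^{k_s}-1|$, one must keep the relevant linear form away not from $0$ but from the whole period lattice $2\pi i\Z$, which forces us to adjoin the algebraic number $-1$ (with $\log(-1)=i\pi$) to the list $\xi_1,\dots,\xi_s$.

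First I would dispose of degenerate cases: if some $\xi_j=0$, then in any admissible tuple $(k_1,\dots,k_s)$ the corresponding exponent is $\geq 0$, the product is $0$, and $|0-1|=1>C_1(\max_j|k_j|+1)^{-C_2}$ as soon as $C_1<1$; and any factor with $\xi_j=1$ may be deleted. So assume all $\xi_j$ are nonzero, fix logarithms $\ell_j=\log\xi_j$, and let $(k_1,\dots,k_s)\in\Z^s\setminus\{0\}$ satisfy $\Xi:=\xi_1^{k_1}\cdots\xi_s^{k_s}\neq1$. Set $H=\max_j|k_j|\geq1$ and $w=\sum_{j=1}^{s}k_j\ell_j$, so that $\Xi=e^{w}$ and $|w|\leq AH$ where $A=\sum_j|\ell_j|$.

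Next I would split on the size of $\Re w$. If $|\Re w|\geq1$ then $|\Xi|=e^{\Re w}\notin(e^{-1},e)$, so $|\Xi-1|\geq1-e^{-1}>\tfrac12$ and we are done. Otherwise pick $m\in\Z$ with $|\Im w-2\pi m|\leq\pi$ and put
\[
w'=w-2\pi i m=\sum_{j=1}^{s}k_j\ell_j+(-2m)\log(-1),
\]
a nonzero $\Z$-linear form (nonzero since $e^{w'}=\Xi\neq1$) in the logarithms of the $s+1$ nonzero algebraic numbers $\xi_1,\dots,\xi_s,-1$; moreover $|m|\leq(AH+\pi)/2\pi$, so all of its coefficients are $O(H)$. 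Baker's bound then gives $|w'|>c_1'(H+1)^{-c_2}$ with $c_1'>0$ and $c_2>1$ depending only on $\xi_1,\dots,\xi_s$ (enlarging $c_2$ if needed). Since $|w'|^2=(\Re w)^2+(\Im w-2\pi m)^2<1+\pi^2$, we have $|w'|<4$, and the elementary estimate $|e^{z}-1|\gg\min\{1,|z|\}$ (Taylor expansion near $0$, compactness away from the zero set $2\pi i\Z$ otherwise) yields $|\Xi-1|=|e^{w'}-1|\gg|w'|>c_1'(H+1)^{-c_2}$. Taking $C_2=c_2$ and $C_1$ a sufficiently small positive constant (below $1$, below $\tfrac12$, and below the implied constant times $c_1'$), we obtain $|\Xi-1|>C_1(H+1)^{-C_2}$ whenever $\Xi\neq1$, which says precisely that the displayed inequality has no solution.

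The step needing the most care is the choice of the form $w'$: bounding $|w|=|\sum_jk_j\ell_j|$ from below --- which is what a direct application of Baker's theorem already delivers --- is useless for estimating $|e^{w}-1|$, since $w$ may lie very close to a nonzero point of $2\pi i\Z$ while $|w|$ stays bounded away from $0$. Passing to $w'=w-2\pi i m$ and absorbing $2\pi i$ as $2\log(-1)$ is exactly what keeps the problem within the reach of linear forms in logarithms of algebraic numbers; everything afterwards is routine, the only bookkeeping being the estimate $|m|=O(H)$ so that the auxiliary coefficient does not degrade the exponent $c_2$.
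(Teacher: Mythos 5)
Your proof is correct. Note, however, that the paper offers no proof of this lemma at all: it is stated as ``a classical result of A.~Baker'' in ``greatly simplified form'' and simply cited to~\cite{Bak}, so there is nothing in the text to compare against step by step. What you have written is the standard (and correct) reduction of the multiplicative statement to the additive one on linear forms in logarithms, and you correctly identify the one genuinely non-trivial point in that reduction: a lower bound on $|\sum_j k_j\log\xi_j|$ alone is useless, because $\xi_1^{k_1}\cdots\xi_s^{k_s}$ can be close to $1$ while the chosen value of the logarithm sits near a nonzero point of $2\pi i\Z$; adjoining $-1$ (equivalently $2\pi i=2\log(-1)$) to the list and bounding the auxiliary coefficient $m$ by $O(H)$ is exactly how one stays within Baker's theorem without degrading the exponent. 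The surrounding bookkeeping (the case $|\Re w|\ge 1$, the bound $|e^{z}-1|\gg\min\{1,|z|\}$ on the disk $|z|<4$, which contains no nonzero period, and the harmless enlargement of $c_2$ to exceed $1$) is all sound. In short: you have supplied a complete proof of a statement the authors take as a black box, and your derivation is the one any careful reader would have to reconstruct from~\cite{Bak} or~\cite{BakWu}.
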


 We are now ready to establish a necessary result 
 which is needed for an application of Lemma~\ref{lem:Discr}.
 
\begin{lemma}
 \label{lem:Type} There are   constants $c_1>0$ and $c_2>1$ 
depending only on $q$ and $g$ such that 
if  $\cC$ is a smooth projective curve  of genus $g \ge 1$ 
over $\F_q$ and
$\vartheta_1, \ldots, \vartheta_g$ are linearly 
independent modulo $1$ then
$$
\|k_1 \vartheta_1 + \ldots + k_g \vartheta_g\| \ge c_1
\(\max\{|k_1|, \ldots, |k_g|\}+1\)^{-c_2}
$$
for any  non-zero  vector $(k_1, \ldots, k_g) \in \Z^g$. 
\end{lemma}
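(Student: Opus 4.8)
The plan is to deduce Lemma~\ref{lem:Type} from Baker's theorem in the form of Lemma~\ref{lem:LinFormLog}, applied to the algebraic numbers $\xi_j = \tau_j/\taubar_j$ for $j = 1, \ldots, g$, together with the elementary observation that $|e^{i\theta} - 1| \asymp \|\theta/2\pi\|$. The point is that the quantity $\|k_1\vartheta_1 + \cdots + k_g\vartheta_g\|$ controls how close a certain product of Frobenius eigenvalue ratios is to $1$, and Baker's theorem forbids that product from being too close to $1$ unless all the exponents vanish.

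First I would record that by~\eqref{eq:theta} we have $\tau_j/\taubar_j = e^{2\pi i \vartheta_j}$ (taking $\taubar_j = \overline{\tau_j}$), so that for any integer vector $(k_1, \ldots, k_g)$,
\[
\prod_{j=1}^{g} \(\frac{\tau_j}{\taubar_j}\)^{k_j} = e^{2\pi i (k_1 \vartheta_1 + \cdots + k_g \vartheta_g)} = e^{2\pi i \theta}, \qquad \theta = k_1\vartheta_1 + \cdots + k_g\vartheta_g.
\]
Next, the standard inequality $\tfrac{4}{\pi^2}\cdot(2\pi\|\theta\|)^2 \le |e^{2\pi i\theta} - 1|^2 \le (2\pi\|\theta\|)^2$ — equivalently $|e^{2\pi i \theta} - 1| \asymp \|\theta\|$, using $|e^{it}-1| = 2|\sin(t/2)|$ and $\tfrac{2}{\pi}|x| \le |\sin x| \le |x|$ for $|x| \le \pi/2$ — shows that
\[
\|k_1\vartheta_1 + \cdots + k_g\vartheta_g\| \gg \left| \prod_{j=1}^{g} \(\tau_j/\taubar_j\)^{k_j} - 1 \right|,
\]
with an absolute implied constant. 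Now I apply Lemma~\ref{lem:LinFormLog} with $s = g$ and $\xi_j = \tau_j/\taubar_j$: there are constants $C_1 > 0$, $C_2 > 1$ (depending on these algebraic numbers, hence only on $\cC$, and ultimately one checks only on $q$ and $g$ — see the next paragraph) such that for every nonzero $(k_1, \ldots, k_g) \in \Z^g$, either the product equals $1$ or it exceeds $C_1(\max_j|k_j| + 1)^{-C_2}$ in absolute value. The hypothesis that $\vartheta_1, \ldots, \vartheta_g$ are linearly independent modulo $1$ rules out the first alternative: if $\prod_j (\tau_j/\taubar_j)^{k_j} = 1$ then $k_1\vartheta_1 + \cdots + k_g\vartheta_g \in \Z$, i.e.\ $k_1\vartheta_1 + \cdots + k_g\vartheta_g - m\cdot 1 = 0$ for some $m \in \Z$, forcing $(k_1, \ldots, k_g) = 0$ by linear independence. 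Combining the two displayed inequalities yields
\[
\|k_1\vartheta_1 + \cdots + k_g\vartheta_g\| \gg C_1 \(\max_j|k_j| + 1\)^{-C_2},
\]
which is the claim with suitable $c_1, c_2$.

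The one genuine subtlety — and the main obstacle — is the claim that $c_1$ and $c_2$ depend only on $q$ and $g$, not on the individual curve $\cC$. A priori Baker's theorem gives constants depending on the heights and degrees of the $\xi_j = \tau_j/\taubar_j$. But the $\tau_j$ are algebraic integers of absolute value $q^{1/2}$ lying in a number field of degree at most $2g$ over $\Q$ (the splitting field of the degree-$2g$ polynomial $P(T)$, whose coefficients are bounded in terms of $q$ and $g$ by the Weil bounds), so their heights and the degree of the field they generate are bounded purely in terms of $q$ and $g$. Hence the effective version of Lemma~\ref{lem:LinFormLog} can be invoked with data depending only on $(q, g)$, and since there are only finitely many possible polynomials $P(T)$ for given $q$ and $g$ one may in any case simply take the worst constants over this finite list. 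I would spell this finiteness/uniformity point out carefully, as it is exactly what makes the constant $\eta$ in Theorem~\ref{thm:IndivBound} depend only on $q$ and $g$. The rest is the bookkeeping of plugging $\varphi(t) = c_1^{-1}(t+1)^{c_2}$ into Lemma~\ref{lem:Discr}, which is routine.
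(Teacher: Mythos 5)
Your proposal is correct and follows essentially the same route as the paper: the authors also rewrite $\|k_1\vartheta_1+\cdots+k_g\vartheta_g\|$ as the distance of a product of Frobenius eigenvalues (they use $\tau_1^{2k_1}\cdots\tau_g^{2k_g}q^{-k_1-\cdots-k_g}$, which equals your $\prod_j(\tau_j/\overline{\tau_j})^{k_j}$ since $\tau_j\overline{\tau_j}=q$) from $1$, rule out the product being exactly $1$ via the linear independence hypothesis, apply Lemma~\ref{lem:LinFormLog}, and obtain uniformity in $q$ and $g$ from the finiteness of the possible numerators $P(T)$. Your treatment of the elementary inequality $|e^{2\pi i\theta}-1|\asymp\|\theta\|$ and of the uniformity of the Baker constants is, if anything, slightly more careful than the paper's.
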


\begin{proof} Suppose that for some integer $k_0$ we 
have 
$$
k_1 \vartheta_1 + \ldots + k_g \vartheta_g - k_0 = \delta
$$
where $\delta \in [-1/2, 1/2]$.
Then, recalling~\eqref{eq:theta}, we derive
$$
\tau_1^{2k_1}\ldots \tau_g^{2k_g} =q^{k_1+\ldots+ k_g} e^{- \pi i \delta}
$$
We see that because of the linear independence of 
$1, \vartheta_1, \ldots, \vartheta_g$ we have
$\delta>0$. We can assume that $\delta$ is sufficiently small
(as otherwise there is nothing to prove). 
Hence 
$$
0 < \left|\tau_1^{2k_1}\ldots \tau_g^{2k_g}q^{-k_1-\ldots- k_g} -1 \right| = \left|e^{- \pi i \delta} -1 \right| =
\pi \delta + O(\delta^2) \le 4 \delta
$$
Applying Lemma~\ref{lem:LinFormLog}, we obtain the desired 
result with the constants  $c_1$ and $c_2$ depending on $\vartheta_1, \ldots, \vartheta_g$.
However, examining the zeta-function of the curve $\cC$, see~\cite[Section~VIII.5]{Lor}, 
we conclude for each $q$ and $g$ there are only finitely many possibilities 
for $\vartheta_1, \ldots, \vartheta_g$ thus $c_1$ and $c_2$ can 
be made to depend only on $q$ and $g$. 
\end{proof}

\subsection{Concluding the Proof}

We see from Lemma~\ref{lem:Type} that Lemma~\ref{lem:Discr} 
applies to the discrepancy $\Delta(N)$ of the points~\eqref{eq:Points}
with $\varphi(t) = c_1 (t+1)^{c_2}$, thus
\begin{equation}
\label{eq:Discr}
\Delta(N) \le N^{-\kappa}, 
\end{equation}
where $\kappa$ depends only on $\cC$. 
As we have mentioned, the domain $\cV_g(\beta, \gamma)$ 
satisfies the condition of 
Lemma~\ref{lem:Weyl bound}. 
Recalling~\eqref{eq:T V} and  combining the bound~\eqref{eq:Discr} 
with Lemmas~\ref{lem:LNW bound} and~\ref{lem:Weyl bound}, 
we obtain 
$$
T_{\beta,\gamma}(N) =  N \mu\(\cV_g(\beta, \gamma)\) + 
O\(N^{1-\eta}\),
$$
where $\mu$ denotes the Lebesgue measure on $ [0,1]^g$.
Since 
$$
\mu\(\cV_g(\beta, \gamma)\)  = \lambda_g(\beta, \gamma)
$$
we conclude the proof.

\section{Proof of Theorem~\ref{thm:LinIndep}}

\subsection{Elliptic Curves}

In the case of $g=1$ the result follows immediately
from~\cite[Lemma~2.5]{LuSh}. However here we present 
a more general statement which maybe  of independent 
interest.

 \begin{lemma}
\label{lem:MultIndep-general}
Suppose that $\cC$ is an ordinary smooth projective curve  of genus $g \ge 1$ 
over $\F_q$. Then all Frobenius angles  
$\vartheta_1, \ldots, \vartheta_g$ are irrational. 
\end{lemma}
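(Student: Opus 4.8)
The plan is to assume, for contradiction, that some Frobenius angle $\vartheta_j$ is rational, say $\vartheta_j = a/b$ in lowest terms. By~\eqref{eq:theta} this means $\tau_j = q^{1/2} e^{\pi i a/b}$, so $\tau_j^{2b} = q^b$ is a positive rational integer. I would then exploit the fact that $\tau_j$ is an algebraic integer which is a $p$-adic unit (this is where the ordinariness hypothesis enters, after reordering so that $\tau_1,\ldots,\tau_g$ are the $p$-adic units among the Frobenius eigenvalues — at least $g$ of the $2g$ eigenvalues are $p$-adic units, and since they come in conjugate pairs $\tau_j,\btau_j$ with $\tau_j\btau_j=q$, one checks that each conjugate pair contributes at most one $p$-adic unit, hence exactly $g$ of them are units and we may take these to be $\tau_1,\ldots,\tau_g$).

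The key contradiction then runs as follows. If $\tau_j$ is a $p$-adic unit but $\tau_j^{2b} = q^b$, then taking $p$-adic valuations gives $0 = 2b\,v_p(\tau_j) = v_p(q^b) = b\,v_p(q) > 0$ since $q$ is a power of $p$, which is absurd. Thus no $p$-adic-unit Frobenius eigenvalue can have a rational angle. This disposes of $\vartheta_1,\ldots,\vartheta_g$ after the reindexing above, which is exactly the statement of the lemma.

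The step requiring the most care is the combinatorial bookkeeping about which eigenvalues are $p$-adic units: one must argue that within a conjugate pair $\{\tau_j,\btau_j\}$ with product $q$, both cannot simultaneously be $p$-adic units (their product would then be a unit, contradicting $v_p(q)>0$), so the ``at least half are units'' condition forces precisely one per pair, giving a clean set of $g$ units that can be labeled $\tau_1,\ldots,\tau_g$. I also need to be slightly careful that the embedding of $\overline{\Q}$ into $\overline{\Q}_p$ is fixed once and for all (as the paper remarks just before Theorem~\ref{thm:LinIndep}) so that ``$p$-adic unit'' is well defined; once that is granted, the valuation computation is immediate and the argument is short.
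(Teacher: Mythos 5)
Your proposal is correct and follows essentially the same route as the paper: ordinariness plus $\tau_j\btau_j=q$ forces exactly one $p$-adic unit in each conjugate pair, and a rational angle $\vartheta_j=a/b$ gives $\tau_j^{2b}=\btau_j^{2b}=q^b$, which is incompatible with either member of the pair being a unit. The paper simply asserts this last incompatibility where you spell out the $p$-adic valuation computation; the substance is identical.
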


\begin{proof} Since for every pair of conjugated Frobenius eigenvalues we
have $\tau_j \btau_j = q$, then for $j=1, \ldots, g$
from $\tau_j$ and $\btau_j$ at most one can be a $p$-adic unit. From the  definition of an ordinary 
curve we conclude that for every $j=1, \ldots, g$, exactly one 
from  $\tau_j$ and $\btau_j$, is a $p$-adic unit. 

However,  if $\vartheta_i = r/s$ is a rational Frobenius angle, 
then for the corresponding Frobenius eigenvalues we obtain 
$$
\tau_i^{2s} = \btau_i^{2s} = q^{s}.
$$
Thus neither of $\tau_i$ and $\btau_i$ can be a $p$-adic unit. This  contradiction concludes the proof. 
\end{proof}

\subsection{Curves of Genus $g=2$}

Every curve of genus $2$ is either ordinary or supersingular or of $p$-rank 1.
(equivalently the Jacobian of the curve is either ordinary or supersingular or
of $K3$ type);  for example, this 
follows from~\cite[n.~133, pp.~1--20]{Serre}. 
If a 
curve of genus $g=2$ is supersigular, then its Jacobian over 
$\overline{\F}_q$ is isogenous to a product of supersingular elliptic curves and
cannot be absolutely simple. On the other hand, if a curve is of $p$-rank 1 and its
Jacobian is absolutely simple, then the claim follows from a result of 
Yu.~G.~Zarhin~\cite{Zarhin}. Thus the only remaining case is the case of ordinary curves
which we deal with in the following. 

In fact here we   establish the linear independence of $\vartheta_1,\vartheta_2$ modulo $1$ for 
any   two Frobenius angles $\vartheta_1,\vartheta_2$ (with 
$\vartheta_1 \ne  \pm \vartheta_2$) for 
an ordinary smooth projective curve of arbitrary  genus $g\ge 2$ which has an
absolutely simple Jacobian. It is slightly more general than what we need for the
proof of Theorem~\ref{thm:LinIndep} and can be of independent interest.

First of all, we need the following property of the numerator $P_m(T)$ 
in~\eqref{eq:rat fun}
of the zeta-function of $\cC$ which is a result of Honda-Tate theory for ordinary
varieties (see~\cite[Theorem~3.3]{Howe}).

\begin{lemma}\label{lem:irred}
If an ordinary smooth projective curve of arbitrary  genus $g\ge 2$ has an
absolutely simple Jacobian, then the polynomials $P_m(T)$ are irreducible 
over $\Z$ for every $m =1,2, \ldots$.
\end{lemma}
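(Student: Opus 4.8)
The plan is to deduce irreducibility of every $P_m(T)$ from the irreducibility of $P(T)=P_1(T)$ together with the Honda--Tate description of the Frobenius endomorphism of an ordinary absolutely simple abelian variety. First I would recall why $P(T)$ itself is irreducible over $\Z$ in our setting: since the Jacobian $J$ of $\cC$ is absolutely simple and ordinary, Honda--Tate theory identifies $\Q(\pi)$, where $\pi$ is the Frobenius, with a CM field of degree $2g$ over $\Q$, and $P(T)$ is (up to normalization) the characteristic polynomial of $\pi$ acting on $H^1$, hence equals the minimal polynomial of $\pi$; thus $P(T)$ is irreducible of degree $2g$ and its roots $\tau_1,\btau_1,\ldots,\tau_g,\btau_g$ are exactly the Galois conjugates of $\pi$. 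This is precisely the content invoked via \cite[Theorem~3.3]{Howe}.

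Next, observe that $P_m(T)=\prod_{j=1}^{2g}(1-\tau_j^m T)$ is, up to the same normalization, the characteristic polynomial of $\pi^m$, the Frobenius of $\cC$ viewed over $\F_{q^m}$, acting on the Jacobian of $\cC$ base-changed to $\F_{q^m}$. So it suffices to show that $\pi^m$ again generates a degree-$2g$ field over $\Q$, i.e. that $[\Q(\pi^m):\Q]=[\Q(\pi):\Q]=2g$; equivalently $\Q(\pi^m)=\Q(\pi)$. Since $\Q(\pi^m)\subseteq\Q(\pi)$ always, the task reduces to proving the reverse inclusion, and this is where the ordinariness is essential. The key step is: because $\cC$ is ordinary, $\cC$ over $\F_{q^m}$ is still ordinary and its Jacobian is still absolutely simple (absolute simplicity is a geometric property, insensitive to the finite base change), so $\pi^m$ is the Frobenius of an ordinary absolutely simple abelian variety and, by the same Honda--Tate input, $\Q(\pi^m)$ is a CM field whose degree over $\Q$ equals twice the dimension, namely $2g$. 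Comparing $2g=[\Q(\pi^m):\Q]\le[\Q(\pi):\Q]=2g$ forces equality, hence $P_m(T)$ is the minimal polynomial of $\pi^m$ and is irreducible over $\Q$; being monic up to the leading normalization with integer coefficients, it is irreducible over $\Z$ as well.

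The main obstacle is making the second step airtight: one must be careful that "ordinary and absolutely simple over $\F_q$" transfers to "ordinary and absolutely simple over $\F_{q^m}$". Absolute simplicity is immediate since it is defined over $\overline{\F}_q$. For ordinariness, I would note that the $p$-rank (the number of Frobenius eigenvalues that are $p$-adic units) is a geometric invariant of the abelian variety and is unchanged under base extension — raising a $p$-adic unit to the $m$th power yields a $p$-adic unit, and likewise a non-unit stays a non-unit — so the count is preserved and the variety over $\F_{q^m}$ remains ordinary. With that in hand the argument closes; alternatively, and perhaps more cleanly, one can cite directly that \cite[Theorem~3.3]{Howe} applies verbatim to the abelian variety $J$ over $\F_{q^m}$, which already packages both the irreducibility of $P_m$ and the CM-field structure. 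I would present it in the latter form to keep the proof short.
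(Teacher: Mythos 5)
Your argument is correct and follows essentially the same route as the paper, which offers no proof at all beyond citing \cite[Theorem~3.3]{Howe} as a consequence of Honda--Tate theory for ordinary varieties --- precisely the input you use. Your explicit check that ordinariness and absolute simplicity persist under base change to $\F_{q^m}$ (so that the cited result applies verbatim to the Jacobian over $\F_{q^m}$ for every $m$, with Frobenius $\pi^m$) supplies the one detail the paper leaves implicit.
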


We are  now ready to prove our principal result.

\begin{lemma}\label{lem:g-independence}
Let $\cC(\F_q)$ be an ordinary smooth projective curve of genus $g \ge 2$
which has an absolutely simple Jacobian.
Then $\vartheta_1, \vartheta_2$  are linearly independent modulo $1$
for any two Frobenius angles $\vartheta_1$ and $\vartheta_2$ .
\end{lemma}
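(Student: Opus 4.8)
The plan is to argue by contradiction: suppose that $\vartheta_1$ and $\vartheta_2$ are two distinct (up to sign) Frobenius angles of $\cC$ with a nontrivial linear relation $k_1\vartheta_1 + k_2\vartheta_2 \equiv 0 \pmod 1$ for integers $k_1, k_2$ not both zero, and derive a contradiction with Lemma~\ref{lem:irred}. First I would translate the congruence into a multiplicative relation among the Frobenius eigenvalues. Writing $\tau_1 = q^{1/2}e^{\pi i \vartheta_1}$ and $\tau_2 = q^{1/2}e^{\pi i \vartheta_2}$, the relation $k_1\vartheta_1 + k_2\vartheta_2 = k_0 \in \Z$ gives $\tau_1^{2k_1}\tau_2^{2k_2} = q^{k_1+k_2}e^{\pi i k_0} = \pm q^{k_1+k_2}$. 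Thus the ratio $\tau_1^{2k_1}\tau_2^{2k_2}/q^{k_1+k_2}$ is a rational number of absolute value $1$, hence equal to $\pm 1$. In other words, some monomial in $\tau_1, \tau_2$ (allowing the conjugates $\btau_1 = q/\tau_1$, $\btau_2 = q/\tau_2$ to absorb negative exponents) equals a power of $q$ up to sign.

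Next I would exploit the action of $\mathrm{Gal}(\overline{\Q}/\Q)$ on the set of eigenvalues. By Lemma~\ref{lem:irred}, for a suitable $m$ the polynomial $P_m(T)$ is irreducible over $\Z$; taking $m$ to be a common multiple of the relevant exponents, the $2g$ numbers $\tau_j^m$ form a single Galois orbit (a full set of conjugates of an algebraic number of degree $2g$). Applying a Galois automorphism to the multiplicative identity obtained above — which, after raising to a suitable power, can be rewritten as an identity of the form $\sigma(\tau_1^m)^{a}\,\sigma(\tau_2^m)^{b} = \pm q^{c}$ valid for a fixed monomial — we would get the same type of identity for every pair of conjugates. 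The idea is that this forces far too much multiplicative structure among the $\tau_j^m$: since they are all conjugate and all have absolute value $q^{m/2}$, and since $\cC$ is ordinary (so exactly one of $\tau_j, \btau_j$ is a $p$-adic unit for each $j$), the $p$-adic valuations of the conjugates are split evenly between $0$ and $m\,v_p(q)$. A nontrivial monomial relation tying a unit conjugate to a non-unit conjugate via a power of $q$ then pins down the valuation of one in terms of the other, and propagating this through the whole Galois orbit contradicts the simplicity of the Jacobian — equivalently, it contradicts the irreducibility of $P_m(T)$, since it would exhibit a proper Galois-stable multiplicative relation that descends to a nontrivial factorization of the minimal polynomial (the eigenvalues would lie in a proper subfield, or the Newton polygon would be forced to break).

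The main obstacle, and the step I would need to be most careful about, is making the Galois-theoretic propagation precise: from a single monomial relation $\tau_1^{a}\tau_2^{b} = \pm q^{c}$ I must extract a statement that genuinely obstructs irreducibility of some $P_m(T)$, rather than a relation that could hold innocuously. The clean way is to pass to $p$-adic valuations. Normalize $v_p(q) = 1$ (replacing $q$ by a power if necessary); then $v_p(\tau_j) \in \{0,1\}$ for each $j$ by ordinarity, with exactly $g$ zeros and $g$ ones. The relation $a\,v_p(\tau_1) + b\,v_p(\tau_2) = c$ together with its Galois conjugates says that the valuation vector $(v_p(\sigma\tau_1), v_p(\sigma\tau_2))_\sigma$ lies on a fixed affine line for all $\sigma$; but as $\sigma$ ranges over the Galois group this vector must take both values $(0,1)$-type and $(1,0)$-type patterns if the orbit is to contain all $2g$ eigenvalues with the correct split, and one checks that only a trivial relation ($a=b=0$, forcing the original $k_1=k_2=0$) is consistent with the valuation data coming from a single irreducible $P_m$. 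Filling in this combinatorial-valuation argument, and handling the sign $\pm$ and the possibility $\vartheta_1 = \pm\vartheta_2$ which is excluded by hypothesis, is the technical heart; the rest is the routine translation carried out in the proof of Lemma~\ref{lem:Type}.
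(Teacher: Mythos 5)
Your opening moves coincide with the paper's: translate the hypothetical relation $u\vartheta_1+v\vartheta_2+w=0$ into $\tau_1^{2u}\tau_2^{2v}=\pm q^{u+v}$, invoke ordinarity to know the $p$-adic valuations of the eigenvalues, and aim to contradict Lemma~\ref{lem:irred}. But the step you yourself flag as the ``technical heart'' --- that the valuation data propagated through the Galois orbit forces $u=v=0$ --- is where the argument breaks, and it cannot be repaired by valuations alone. Take the case where $\tau_1$ and $\tau_2$ are both $p$-adic units (normalize $v_p(q)=1$, so $v_p(\tau_1)=v_p(\tau_2)=0$). Applying $v_p$ to the relation gives only $u+v=0$; the relation $\tau_1^{2u}\tau_2^{-2u}=\pm1$ is then \emph{invisible} to every $p$-adic valuation, since $v_p(\sigma(\tau_1)^{2u}\sigma(\tau_2)^{-2u})$ need not constrain anything further, and the archimedean absolute values are all equal anyway. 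So the ``combinatorial-valuation argument'' you propose to fill in would conclude nothing in this case: a nontrivial relation with $u=-v\ne0$ is perfectly consistent with all the valuation patterns of an irreducible $P_m$. Similarly, your assertion that the Galois orbit must realize enough valuation patterns $(0,1)$, $(1,0)$, $(0,0)$ to force $a=b=c=0$ is unsupported --- transitivity controls $\sigma(\tau_1)$ alone, not the joint pair $(\sigma(\tau_1),\sigma(\tau_2))$.

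The missing idea, which is how the paper actually finishes, is non-$p$-adic: once the valuation step yields $u+v=0$, the relation becomes $(\tau_1/\tau_2)^{2v}=\pm1$, i.e.\ $\tau_1^{4v}=\tau_2^{4v}$, so $P_{4v}(T)$ has a repeated root and is therefore reducible over $\Z$ --- directly contradicting Lemma~\ref{lem:irred} (this also uses $\vartheta_1\ne\pm\vartheta_2$ to know $\tau_1\ne\tau_2$). In the complementary case where $\tau_1$ and $\btau_2$ are the units, the valuation step gives $u=v$ and the relation $\tau_1^{2v}\tau_2^{2v}=1$ forces $\tau_2$ to be a $p$-adic unit as well, contradicting ordinarity --- there your valuation-based instinct does work. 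You should therefore split into the two (up to symmetry) unit-patterns explicitly and supply the repeated-root argument for the first; without it the proof is incomplete at exactly the point you identified as needing care.
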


\begin{proof}
Suppose $\vartheta_1$, $\vartheta_2$ and $1$ are $\Z$-linear dependent and for some integers $u$, $v$ and $w$ which are not zero simultaneously we have $u\vartheta_1+v\vartheta_2+w=0$. We know that for an ordinary curve two of the eigenvalues corresponding to two different 
angles are $p$-adic units in  $\overline{\Q}_p$
 (see the proof of Lemma~\ref{lem:MultIndep-general}). If we assume that
$\tau_1=\sqrt{q}e^{\pi i\vartheta_1}$ and $\tau_2=\sqrt{q}e^{\pi i\vartheta_2}$ are $p$-adic units in $\overline{\Q}_p$, then from $u\vartheta_1+v\vartheta_2+w=0$ it follows that
$$
\tau_1^{2u}\tau_2^{2v}e^{i\pi 2w}=q^{u+v}.
$$
Now since $\tau_1$ and $\tau_2$ are $p$-adic units in $\overline{\Q}_p$ we have $u+v=0$. This
along with $u\vartheta_1+v\vartheta_2+w=0$ implies that either $u=v=0$ or $\vartheta_1-\vartheta_2=w/v$. If the former case happens, then we have $w=0$ which is a contradiction. If the latter happens, then we have $\tau_1^{2v}=\tau_2^{2v}$. This means that 
the numerator of the zeta-function of $\cC(\F_{q^{2v}})$ has a double root 
and hence it splits, which is a contradiction since we have assumed that the 
Jacobian of the curve is absolutely simple and so the numerator of 
its zeta-function remains irreducible when considered over any extension of $\F_q$.

On the other hand if we assume that  $\tau_1$ and $\btau_2$ are $p$-adic units, then from $u\vartheta_1+v\vartheta_2+w=0$ it follows that
$$
\tau_1^{2u}\btau_2^{-2v}e^{i\pi 2w}=q^{u-v}.
$$
Now since $r_1$ and $\btau_2$ are $p$-adic units in $\overline{\Q}_p$, we have $u=v$. This
along with $u\vartheta_1+v\vartheta_2+w=0$ implies that either $u=v=0$ and hence $w=0$ or $\vartheta_1+\vartheta_2=-w/v$. The former is a contradiction and 
the latter is impossible since it would imply that $\tau_1^{2v}\tau_2^{2v}=1$ which in turn implies that $\tau_2$ is a $p$-adic unit too while this cannot happen as from $\tau_2$ and $\btau_2$ exactly one of them is a $p$-adic unit. The remaining cases can be dealt with similarly.
\end{proof} 

\section{Comments}

\subsection{Statistics of linear independence of Frobenius angles
in families of curves}
\label{sec:stat-indep}
When $\cC$ is a smooth projective ordinary curve of genus  $g \ge 3$ 
over $\F_q$,
then it seems to be more subtle to establish the linear 
independence modulo $1$ of the Frobenius angles of $\cC$. 
E.~Kowalski~\cite{Kow} 
gives a statistical result  
that Frobenius angles of most algebraic curves from 
a certain natural family are linear independent modulo $1$.  One can also find in~\cite{Kow} 
examples of Abelian varieties or curves with  $\Z$-linear dependent  Frobenius angles. 
However, the examples given in~\cite{Kow} correspond to Abelian varieties which
are neither ordinary nor absolutely simple. We think it is natural to conjecture that
Frobenius angles of a smooth projective ordinary curve $\cC$ whose Jacobian is absolutely simple are linearly independent modulo $1$. Notice that in~\cite{HowZhu} it has been shown that most of the smooth 
curve are ordinary and have an absolutely simple Jacobian. 
A natural way to attack this conjecture would be to investigate 
the Galois group of the numerator $P(T)$ of the zeta-function 
of such curves and employ methods of~\cite{Kow}.  
Finally it is worth mentioning the following independence result 
from~\cite{KrSc} attributed to B.~Poonen which actually goes back to an earlier
paper by M. Spie\ss~\cite{Spi}:
If $\cE_1, \ldots, \cE_k$ are pairwise absolutely non-isogenous elliptic curves over the 
finite field $\F_q$ and $\tau_i$ is a Frobenius eigenvalue of $E_i$, then 
$\tau_1,\ldots,\tau_k$ are multiplicatively independent and hence the corresponding Frobenius angles are $\Z$-linearly independent. 
This result may be used to study joint distributions
of  $\#\cE_1(\F_q^n), \ldots, \#\cE_k(\F_q^n)$.
Notice that the above result implies that if the Jacobian of a curve of genus $g$ is isogenous to the direct product of $g$ pairwise absolutely non-isogenous elliptic curves,
then the Frobenius angles of the curve are linearly independent
modulo $1$. Another
independence result can be found in~\cite{Len-Zar}.

Another ``statistical'' approach to linear 
independence modulo $1$ of the Frobenius angles
stems from the work of N.~Chavdarov~\cite{Chav}, which 
asserts that   for a fixed genus $g \ge 1$, as $q$ grows, 
the numerator $P(T)$ of the zeta-function $Z(T)$ of ``most'' 
curves of genus $g$ over $\F_q$
is irreducible over $Q$, and, furthermore, the Galois group
of $P(T)$ is the {\it Weyl group\/} $\cW_g$ of the symplectic group  
$\Sp(2g)$.   Now, suppose $\cC$ be such an ordinary smooth absolutely irreducible 
curve of genus $g \ge 1$
defined over  $\F_q$,  (that is,  the Galois group
of $P(T)$ is $\cW_g$).  
Assume the Frobenius angles of  $\cC$ whose Jacobian is absolutely simple
are linearly dependent modulo $1$, that is
\begin{equation}
\label{eq:Rel1}
\prod_{i=1}^g \tau_i^{k_i} = q^{k_0}
\end{equation}
for some nonzero vector $(k_0, \ldots, k_g) \in \Z^{g+1}$.
Since the the Weyl group contains a subgroup generated 
by transpositions of $(\tau_i, \overline{\tau_i})$,  $i=1, \ldots, g$,
see the discussion in~\cite[Section~5]{Chav},
we infer  that for each $i=1, \ldots, g$, 
 we have  an automorphism $\eta_i \in \cW_g$, 
which is complex
conjugation on $\tau_i$ but which is the identity on any $\tau_j$ 
and $\overline{\tau_j}$ with $j \ne i$. 

Clearly if $k_1=\ldots = k_g=0$ then also $k_0=0$. 
Thus without loss of generality we can assume that 
$k_1 \ne 0$. 
Applying the automorphism $\eta_1$, we obtain a new relation
\begin{equation}
\label{eq:Rel2}
\overline{\tau_1}^{k_1}\prod_{i= 2}^g\tau_i^{k_i} = q^{k_0}.
\end{equation}
From~\eqref{eq:Rel1} and ~\eqref{eq:Rel2} we infer that 
$$
\overline{\tau_1}^{k_1} = \tau_1^{k_1}.
$$
Multiplying both sides by $\tau_1^{k_1}$, we deduce
$$
\tau_1^{2k_1}=q^{k_1}.
$$
Thus $\tau_1 = q^{1/2}\rho$ for  a root of unity $\rho$, 
contradicting ordinarity, (see, for example, 
Lemma~\ref{lem:MultIndep-general}). 

\subsection{Possible generalisations}

Our method also applies to studying the distribution of the number
of points on the Jacobians $J_\cC(\F_{q^n})$ of a given curve $\cC$ 
(certainly for an elliptic curve
 $\cC = \cE$ it is the same question as the question of studying $\cE(\F_{q^n})$ 
 and this is
covered by Theorems~\ref{thm:IndivBound} and~\ref{thm:LinIndep}). 
For example, we recall that 
$$
\# J_\cC(\F_{q^n})=\prod_{j=1}^{2g} \(1 - \tau_j^n\) 
$$
see~\cite[Corollary~VIII.6.3]{Lor}.

We note that the analogue  of the Sato-Tate conjecture 
for elliptic curves is also believed to be true
for Kloosterman sums, 
see~\cite{Adolph,ChaLi,FoMic1,FoMic2,FMRS,Katz,KatzSar,Lau,Mich1,Mich2,Nied}
for various 
modifications and generalizations of this conjecture and further references. 
However, in this case the original conjecture is
still open as the result of R.~Taylor~\cite{Tayl}
does not seem to apply to Kloosterman sums. 

For $a \in \F_q^*$ we consider the Kloosterman 
sum
$$
K_{q^n}(a) = \sum_{x\in \F_{q^n}^*} \psi\(\Tr_{\F_{q^n}/\F_q}\(x + ax^{-1}\)\), 
$$
where $\psi$ is a fixed nonprincipal additive character of $\F_q$ 
and 
$$
\Tr_{\F_{q^n}/\F_q}(z) = \sum_{j=0}^{n-1} z^{q^j}
$$
is the trace of $z \in \F_{q^n}$ in $\F_q$.  
We have
$$
\left|K_{q^n}(a) \right| \le 2 q^{n/2}, \qquad a \in \F_q^*. 
$$
see~\cite[Theorem~11.11]{IwKow}. 
Therefore,  again, for a fixed $a \in \F_q^*$ we can  define  and study the sequence
$$
\kappa_n =  \frac{ K_{q^n}(a)}{ 2 q^{n/2}} \in [-1,1],
\qquad n =1,2, \ldots\,. 
$$
Since we have the analogue of~\eqref{eq: Card}, that is, 
$$
K_{q^n}(a) =  \sigma^n + \sigmabar^n,
$$
for some complex
conjugate quadratic
irrationalities $\sigma$ and $\sigmabar$ with
$$
|\sigma| = |\sigmabar| = q^{1/2},
$$
see~\cite[Section~11.7]{IwKow} our arguments apply to Kloosterman sums as well.

\section*{Acknowledgements}

We are grateful to Nick Katz for explaining to us the argument
linking linear independence modulo $1$ of the Frobenius angles  
with the work of  N.~Chavdarov~\cite{Chav}, 
which is presented in Section~\ref{sec:stat-indep}.

We would also like to thank Emmanuel Kowalski and Alexey Zaytsev for very insightful discussions and Sandro Mattarei for a very careful reading of the manuscript. 

The comments and suggestions of the anonymous referee have been 
very helpful as well.

This paper was initiated during a very enjoyable visit of I.~S. at
the Department of Combinatorics \& Optimization of the University
of Waterloo whose hospitality, support and stimulating
research atmosphere are gratefully appreciated.
Research of O.~A is supported by the 
Claude Shannon Institute, Science Foundation Ireland Grant 06/MI/006
 and research of I.~S. was supported by ARC grant DP0556431.


\begin{thebibliography}{99}


 
\bibitem{Adolph} A.\ Adolphson,  
`On the distribution of angles of Kloosterman sums', 
\emph{J. Reine Angew. Math.} \textbf{395} (1989), 214--220.

\bibitem{BaZh} S.~Baier and L.~Zhao,
`The Sato--Tate conjecture on average for small angles',
\emph{Trans.\ Amer.\ Math.\ Soc.}, 
 \textbf{361} (2009),  1811--1832.
 
  \bibitem{Bak} A. Baker, A. The theory of linear forms in logarithms',
 \emph{Transcendence theory: Advances and applications},
 Academic Press, London-New York, 1977,  1--27.

 \bibitem{BakWu} A. Baker and G. W{\"u}stholz,  
 \emph{Logarithmic forms and Diophantine geometry}, 
 Cambridge Univ. Press, Cambridge, 2007.

\bibitem{BaSh}  W. D. Banks and I. E. Shparlinski,
`Sato--Tate, cyclicity, and divisibility statistics
on average for elliptic curves of small height',
\emph{Israel J. Math.} \textbf{173} (2009), 253--277.



\bibitem{Birch} B.~J.~Birch, `How the number of points of an elliptic curve
over a fixed prime field varies', \emph{J.\ Lond.\ Math.\ Soc.}
\textbf{43} (1968),    57--60.


\bibitem{ChaLi} C.-L.\ Chai and W.-C.\ W.\ Li, 
`Character sums, automorphic forms, equidistribution,
and Ramanujan graphs. I: 
The Kloosterman sum conjecture over function fields', \emph{Forum Math.} 
\textbf{15} (2003), 679--699. 


\bibitem{Chav} N. Chavdarov, `The generic irreducibility
of the numerator of the zeta function in a family of curves with
large monodromy', {\it Duke Math. J.\/}, \textbf{78} (1997),
151--180.

\bibitem{DrTi} M. Drmota and R. Tichy, 
\emph{Sequences, discrepancies and applications},
Springer-Verlag, Berlin, 1997.

\bibitem{FoMic1} {\'E}.\ Fouvry and P.\ Michel,  
`Sommes de modules de sommes d'exponentielles', 
\emph{Pacific J. Math.} \textbf{209} (2003), 261--288. 

\bibitem{FoMic2} {\'E}.\ Fouvry and P.\ Michel,  
`Sur le changement de signe des sommes de Kloosterman', 
\emph{Ann. Math.} \textbf{165} (2007), 675--715.

\bibitem{FMRS} {\'E}.\ Fouvry,
  P.  Michel,  J. Rivat and A. S{\'a}rk{\"o}zy,
`On the pseudorandomness of the signs of Kloosterman sums', 
\emph{J. Aust. Math. Soc.} \textbf{77} (2004),  425--436.


\bibitem{Howe}
E. Howe, `Principally polarized ordinary abelian varieties over finite fields',
\emph{Trans. Amer. Math. Soc.} \textbf{347} (1995), 2361--2401.


\bibitem{HowZhu}
E. Howe and H. J. Zhu, `On the existence of absolutely simple abelian varieties of a given dimension over an arbitrary field',
\emph{J. Number Theory} \textbf{92} (2002), 139--163.


\bibitem{IwKow} H.\ Iwaniec and E.\ Kowalski,
\emph{Analytic number theory},  Amer.  Math. Soc., 
Providence, RI, 2004.

\bibitem{Katz} N.\ M.\ Katz,  \emph{Gauss sums, Kloosterman sums, 
and monodromy groups\/},   Princeton Univ. Press, Princeton, NJ, 1988.  


\bibitem{KatzSar} N.\ M.\ Katz and P.\ Sarnak,  
\emph{Random matrices, Frobenius eigenvalues, and monodromy}, 
Amer. Math. Soc, Providence, RI, 1999.

\bibitem{Kow} E.\ Kowalski,
`The large sieve, monodromy, and zeta functions of algebraic curves. II. Independence of the zeros',
\emph{International Mathematical Research Notices}, (2008), 
Art.~ID rnn~091, 1--57.



\bibitem{KrSc} J.\ Krajicek and T.\ Scanlon,
`Combinatorics with definable sets: 
Euler characteristics and Grothendieck rings',  \emph{Bull. Symbolic Logic} \textbf{6} (2000), 311 -- 330.




\bibitem{KuNi}
L. Kuipers and H. Niederreiter, \emph{Uniform
distribution of sequences}, Wiley-Interscience Publ., 1974.
\bibitem{Lac} M. Laczkovich,
`Discrepancy estimates for sets with small
boundary', \emph{ Studia Sci. Math. Hungar.}, \textbf{30} (1995),
105--109.

\bibitem{Lau} G.\ Laumon,  
`Exponential sums and $l$-adic cohomology: A survey',  
\emph{Israel J. Math.} \textbf{120} (2000),  225--257.

\bibitem{Len-Zar}
H. W. Lenstra, Jr and Yu. G. Zarhin,
`The Tate conjecture for almost ordinary Abelian varieties over finite fields',
\emph{Advances in number theory, Proc. 3rd Conference of the Canadian
Number Theory Association}, Oxford Univ. Press, 1993,
179--194.


\bibitem{Lor}
D. Lorenzini,
\emph{An invitation to arithmetic geometry},
Amer. Math. Soc., 1996.


\bibitem{LuSh}
F. Luca and I. E. Shparlinski,
`On the exponent of the group of points on elliptic curves
in extension fields',
\emph{Intern. Math. Research Notices}, \textbf{2005} (2005),
1391--1409.

\bibitem{Mat}  E. M. Matveev, 
`An explicit lower bound for a homogeneous rational linear 
form in logarithms of algebraic numbers. II',  
\emph{Izv. Ross. Akad. Nauk Ser. Mat.}, \textbf{64} (2000), no. 6, 125--180
(in Russian); translation in  \emph{Izv. Math.}, 
\textbf{64} (2000),  1217--1269.

\bibitem{Mich1} P.\ Michel,  
`Autour de la conjecture de Sato-Tate pour les sommes de 
Kloosterman, II', \emph{Duke Math. J.} \textbf{92}  (1998), 221--254.

 \bibitem{Mich2} P.\ Michel,  
`Minoration de sommes d'exponentielles', \emph{Duke Math. J.} \textbf{95} (1998), 227--240.

\bibitem{Nied} H.\ Niederreiter, 
`The distribution of values of Kloosterman sums', \emph{Arch.
Math.} \textbf{56}  (1991),  270--277.

\bibitem{NiWi} H. Niederreiter and J. M. Wills,
`Diskrepanz und Distanz von Massen bezuglich
konvexer und Jordanscher Mengen',
\emph{Math. Z.}, \textbf{144} (1975), 125--134.

\bibitem{Serre} J.-P. Serre, `\OE uvres, IV, 1985--1998'. 
Springer-Verlag, Berlin, 2000.  

\bibitem{Spi} M.~Spie\ss, `Proof of the Tate conjecture for products of elliptic 
curves over finite fields.',
 \emph{Math. Ann.}, \textbf{314} (1999), 285--290. 
 

\bibitem{Tayl} R.~Taylor, `Automorphy for some $l$-adic lifts of
automorphic $\text{mod~}l$ representations, II',
 \emph{Pub. Math. IHES}, \textbf{108} (2008), 183--239. 
 
 \bibitem{Weyl} H. Weyl, `On the volume of tubes',
 \emph{Amer. J. Math.} \textbf{61}, (1939), 461--472.
 
\bibitem{Yos} H. Yoshida, `On an analogue of the Sato conjecture',
 \emph{Inventiones Math.}, \textbf{19}  (1973), 261--277. 
 
\bibitem{Zarhin} Yu. G. Zarhin, `Abelian varieties of {$K3$} type',
\emph{S\'eminaire de {T}h\'eorie des {N}ombres}, {P}aris, 1990--91,
Progr. Math. \textbf{108}, Birkh\"auser, (1993), 263--279.


 \end{thebibliography}
  \end{document}